\documentclass[a4paper,leqno]{article}

\usepackage[T1]{fontenc}
\usepackage[utf8]{inputenc}
\usepackage{mathtools,amssymb,amsthm}
\usepackage{lmodern}
\usepackage{hyperref}
\usepackage{microtype}
\usepackage{color}
\usepackage{enumerate}

\theoremstyle{plain}
\newtheorem{thm}{Theorem}[section]
\newtheorem{lem}[thm]{Lemma}
\newtheorem{coro}[thm]{Corollary}
\newtheorem{prop}[thm]{Proposition}

\newtheorem{thmalph}{Theorem}

\theoremstyle{definition}
\newtheorem{rmrk}[thm]{Remark}
\theoremstyle{remark}

\newcommand{\R}{\mathbb{R}}
\newcommand{\Z}{\mathbb{Z}}
\newcommand{\N}{\mathbb{N}}
\newcommand{\F}{\mathbb{F}}
\renewcommand{\H}{\mathbb{H}}
\newcommand{\C}{\mathbb{C}}
\renewcommand{\O}{\mathbb{O}}
\newcommand{\calD}{\mathcal{D}}
\newcommand{\calE}{\mathcal{E}}
\newcommand{\calF}{\mathcal{F}}
\newcommand{\calO}{\mathcal{O}}
\newcommand{\calP}{\mathcal{P}}
\newcommand{\calY}{\mathcal{Y}}

\newcommand*{\rom}[1]{\romannumeral}

\providecommand{\msc}[1]{{\noindent\small\textbf{Mathematics Subject Classification (2020)} --- #1.}}
\providecommand{\keywords}[1]{{\noindent\small\textbf{Keywords} --- #1.}}

\DeclareMathOperator{\upO}{O}

\DeclareMathOperator{\SU}{SU}
\DeclareMathOperator{\U}{U}
\DeclareMathOperator{\Spin}{Spin}
\DeclareMathOperator{\Sp}{Sp}
\DeclareMathOperator{\IM}{Im}
\DeclareMathOperator{\RE}{Re}

\DeclareMathOperator{\Id}{Id}

\DeclareMathOperator{\1}{\mathbf{1}}
\DeclareMathOperator{\0}{\mathbf{0}}

\DeclareMathOperator*{\Res}{Res}

\DeclareMathOperator{\supp}{supp}

\DeclareMathOperator{\Ind}{Ind}

\DeclareMathOperator{\const}{const}

\numberwithin{equation}{section}


\providecommand{\contact}{{
\bigskip
\small
\noindent
\textsc{J.~Frahm}:
Department of Mathematics, Aarhus University, Ny Munkegade 118,  8000, Aarhus C, Denmark
\par\noindent\nopagebreak
\textit{E-mail address:} \href{mailto:frahm@math.au.dk}{\texttt{frahm@math.au.dk}}\\

\noindent
\textsc{P.~Spilioti}:
Mathematisches Institut, Georg--August Universit\"{a}t G\"{o}ttingen, Bunsenstra{\ss}e 3--5, 37073 G\"{o}ttingen, Germany
\par\noindent\nopagebreak
\textit{E-mail address:} \href{mailto:polyxeni.spilioti@mathematik.uni-goettingen.de}{\texttt{polyxeni.spilioti@mathematik.uni-goettingen.de}}
}}

\title{Resonances and residue operators for pseudo-Riemannian hyperbolic spaces}
\author{Jan Frahm, Polyxeni Spilioti}

\begin{document}

\maketitle

\begin{abstract}
For any pseudo-Riemannian hyperbolic space $X$ over $\R,\C,\H$ or $\O$, we show that the resolvent $R(z)=(\Box-z\Id)^{-1}$ of the Laplace--Beltrami operator $-\Box$ on $X$ can be extended meromorphically across the spectrum of $\Box$ as a family of operators $C_c^\infty(X)\to \calD'(X)$. Its poles are called \emph{resonances} and we determine them explicitly in all cases. For each resonance, the image of the corresponding residue operator in $\calD'(X)$ forms a representation of the isometry group of $X$, which we identify with a subrepresentation of a degenerate principal series. Our study includes in particular the case of even functions on de Sitter and Anti-de Sitter spaces.

For Riemannian symmetric spaces analogous results were obtained by Miatello--Will and Hilgert--Pasquale. The main qualitative differences between the Riemannian and the non-Riemannian setting are that for non-Riemannian spaces the resolvent can have poles of order two, it can have a pole at the branching point of the covering to which $R(z)$ extends, and the residue representations can be infinite-dimensional.
\end{abstract}

\renewcommand{\abstractname}{Résumé}

\begin{abstract}
Pour tout espace hyperbolique pseudo-Riemannien $X$ sur $\R,\C,\H$ ou $\O$, cet article montre que la résolvante $R(z)=(\Box-z\Id)^{-1}$  de l'opérateur de Laplace--Beltrami $-\Box$ sur $X$ peut être étendue méromorphiquement à travers le spectre de $\Box$ en une famille d'opérateurs $C_c^\infty(X)\to \mathcal{D}'(X)$. Ses pôles sont appelés \emph{résonances} et nous les déterminons explicitement dans chacun des cas. Pour chaque résonance, l'image de l'opérateur du résidu correspondant dans $\mathcal{D}'(X)$ forme une représentation du groupe d'isométrie de $X$, que l'on identifie avec une sous-représentation d’une série principale dégénérée. Cette étude inclus le cas particulier des fonctions paires sur les espaces de de Sitter et Anti-de Sitter.

Pour des espaces symétriques Riemannien des résultats analogues ont été obtenus par Miatello--Will et Hilgert--Pasquale. Les principales différences entre le cas Riemannien et pseudo-Riemannien sont que, dans le cas non-Riemannien, la résolvante peut avoir des pôles d'ordre deux, elle peut avoir un pôle aux points auxquels le revêtement où l’on étend $R(z)$ ramifie, et les représentations du résidu peuvent être de dimension infinie.  
\end{abstract}

\bigskip
\keywords{pseudo-Riemannian manifold; hyperbolic space; Laplace--Beltrami operator; resolvent; resonances}
\newline
\msc{Primary: 43A85; Secondary: 22E46, 58J50}


\section*{Introduction}

Resonances of the Laplace--Beltrami operator $-\Delta$ on a Riemannian manifold $X$ are the poles of a possible meromorphic extension of its resolvent
$$ R(z)=(\Delta-z\Id)^{-1} \qquad (z\in\C\setminus\sigma(\Delta)) $$
across the spectrum $\sigma(\Delta)\subseteq\C$ of $\Delta$. Their position in the complex plane (or a certain Riemann surface) carries refined information about the long-term behavior of the corresponding dynamical system, so they are important spectral invariants of the manifold $X$. Consequently, there has been a lot of work on resonances for special classes of non-compact Riemannian manifolds, for instance for asymptotically hyperbolic manifolds by Mazzeo--Melrose~\cite{MM87}, Guillop\'{e}--Zworski~\cite{GZ95} and Guillarmou~\cite{Gui05}, and for Riemannian symmetric spaces by Miatello--Wallach~\cite{MW92}, Mazzeo--Vasy~\cite{MV05}, Strohmeier~\cite{Str05}, Hilgert--Pasquale~\cite{hilgert2009resonances} and Hilgert--Pasquale--Przebinda~\cite{HPP16,HPP17a,HPP17b}.

Pseudo-Riemannian manifolds also come with a Laplace--Beltrami operator $-\Box$. Although $\Box$ no longer elliptic and positive, it still makes sense to consider its resolvent and study its meromorphic continuation. In the pseudo-Riemannian setting, however, much less seems to be known (see e.g. \cite{BMB93,SBZ97} for some Lorentzian manifolds modelling black holes).\\

In this paper, we study resonances of the Laplace--Beltrami operator on a certain family of pseudo-Riemannian manifolds. More precisely, let $X$ be a hyperbolic space, i.e. a reductive symmetric space of the form
$$ X=G/H=\U(p,q;\F)/\big(\U(1;\F)\times\U(p-1,q;\F)\big) \qquad (\F=\R,\C,\H,\O). $$
(Note that the case $\F=\O$ only makes sense for $p+q\leq3$, see Section~\ref{sec:HyperbolicSpaces} for details.) Then $X$ carries a unique (up to scalar multiples) $G$-invariant pseudo-Riemannian metric of signature $(dq,d(p-1))$, where $d=\dim_\R\F\in\{1,2,4,8\}$. The corresponding Laplace--Beltrami operator $-\Box$ is self-adjoint on $L^2(X)$ (with respect to the Riemannian measure) and we consider its resolvent
$$ R(z)=(\Box-z\Id)^{-1} \qquad (z\in\C\setminus\sigma(\Box)). $$

For $q=0$, the space $X$ is a compact Riemannian manifold, so $\sigma(\Box)$ is discrete and consists of eigenvalues, hence the resolvent $R(z)$ is already meromorphic on $\C$. If $p=1$, then $X$ is a non-compact Riemannian symmetric space and $\Box$ has purely continuous spectrum. In this case, the meromorphic extension of $R(z)$ to a Riemann surface was studied in detail by Miatello--Will~\cite{MW00}, Mazzeo--Vasy~\cite{MV05} and Hilgert--Pasquale~\cite{hilgert2009resonances}.

We therefore assume $p\geq2$ and $q\geq1$, so that $X$ is non-compact and non-Riemannian, and put $\rho=\frac{d(p+q)-2}{2}$. In this case, the spectrum $\sigma(\Box)$ of $\Box$ is of the form $\sigma_c(\Box)\cup\sigma_p(\Box)$, with $\sigma_c(\Box)=[\rho^2,\infty)$ the continuous spectrum and
$$ \sigma_p(\Box) = \begin{cases}\{\rho^2-s^2:s\in(\rho+2\Z+1)\cap\R_+\}&\mbox{for $\F=\R$ and $q$ odd,}\\\{\rho^2-s^2:s\in(\rho+2\Z)\cap\R_+\}&\mbox{for $\F=\R$ and $q$ even, or $\F=\C,\H,\O$,}\\\end{cases} $$
the discrete set of eigenvalues (see Theorem~\ref{thm:InversionFormula}). We show that $R(z)$ has a meromorphic extension to a double cover of $\C\setminus\{\rho^2\}$, cut across the half line $[\rho^2,\infty)$. To this end we define
$$ \widetilde{R}(\zeta) = R(\rho^2+\zeta^2) = (\Box-\rho^2-\zeta^2)^{-1} \qquad (\IM\zeta>0,\zeta^2+\rho^2\not\in\sigma_p(\Box)), $$
then the extension of $R(z)$ across $\sigma(\Box)$ to a double cover of $\C\setminus\{\rho^2\}$ is equivalent to the extension of $\widetilde{R}(\zeta)$ across the real line.

\begin{thmalph}[see Theorem~\ref{thm:MeroCont}]
	The resolvent $\widetilde{R}(\zeta):C_c^\infty(X)\to\calD'(X)$, initially defined for $\IM\zeta>0$ with $\zeta^2+\rho^2\not\in\sigma_p(\Box)$, has a meromorphic extension to $\zeta\in\C$.
\end{thmalph}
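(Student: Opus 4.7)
The plan is to start from the Plancherel-type inversion formula for $X=G/H$ stated in Theorem~\ref{thm:InversionFormula}. For $f\in C_c^\infty(X)$, that formula provides a spectral decomposition
$$ f \;=\; \int_0^\infty (\calF_c f)(\lambda,\cdot)\,\mu(\lambda)\,d\lambda \;+\; \sum_{s\in\Sigma}(\calF_d f)(s,\cdot), $$
where $(\calF_c f)(\lambda,\cdot)$ is a generalized eigenfunction of $\Box$ with eigenvalue $\rho^2+\lambda^2$, $(\calF_d f)(s,\cdot)$ is an honest eigenfunction with eigenvalue $\rho^2-s^2\in\sigma_p(\Box)$, $\mu(\lambda)$ is the continuous Plancherel density, and $\Sigma\subset\R_+$ indexes $\sigma_p(\Box)$. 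Applying $\widetilde{R}(\zeta)=(\Box-\rho^2-\zeta^2)^{-1}$ spectrally, for $\IM\zeta>0$ with $\rho^2+\zeta^2\notin\sigma_p(\Box)$ one obtains
$$ \widetilde{R}(\zeta)f \;=\; \int_0^\infty \frac{(\calF_c f)(\lambda,\cdot)}{\lambda^2-\zeta^2}\,\mu(\lambda)\,d\lambda \;-\; \sum_{s\in\Sigma}\frac{(\calF_d f)(s,\cdot)}{s^2+\zeta^2}, $$
so the task reduces to extending each piece meromorphically in $\zeta\in\C$ as an element of $\calD'(X)$.

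The discrete piece is a Mittag-Leffler-type series with at most simple poles at $\zeta=\pm is$. Since $\calF_d$ commutes with $\Box$, the identity $(\calF_d\Box^N f)(s,\cdot)=(\rho^2-s^2)^N(\calF_d f)(s,\cdot)$, combined with $\Box^N f\in C_c^\infty(X)$, yields rapid decay of $(\calF_d f)(s,\cdot)$ in $s$ (uniformly on compacta in $x$). Hence the sum converges in $\calD'(X)$ uniformly on compacta of $\C\setminus\{\pm is:s\in\Sigma\}$ and defines a meromorphic $\calD'(X)$-valued function there.

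For the continuous piece I would first use the functional equation relating $(\calF_c f)(\lambda,\cdot)\mu(\lambda)$ to its value at $-\lambda$ to symmetrize the integrand, rewriting the integral as $\tfrac12\int_{-\infty}^\infty$. Since $f\in C_c^\infty(X)$, the map $\lambda\mapsto(\calF_c f)(\lambda,\cdot)$ is of Paley--Wiener type, that is, entire with rapid horizontal-strip decay, and $\mu(\lambda)$ extends meromorphically to $\C$ with poles governed by the underlying $c$-function. For $\IM\zeta>0$ the pole $\lambda=\zeta$ of $1/(\lambda^2-\zeta^2)$ lies in the upper half plane; to continue $\widetilde{R}(\zeta)$ across $\R$ I would deform the contour upward so that it remains on the upper side of $\lambda=\zeta$ as $\zeta$ crosses into $\IM\zeta<0$. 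The resulting expression, of the schematic shape
$$ \tfrac12\int_\Gamma \frac{(\calF_c f)(\lambda,\cdot)\,\mu(\lambda)}{\lambda^2-\zeta^2}\,d\lambda \;+\; \pi i\,\frac{(\calF_c f)(\zeta,\cdot)\,\mu(\zeta)}{\zeta}, $$
is meromorphic in $\zeta\in\C$; its only new singularities come from the poles of $\mu(\zeta)$ and from the apparent $1/\zeta$ at the branch point $\zeta=0$.

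The main obstacle is technical rather than conceptual: to make the distribution-valued contour deformation rigorous one must control $(\calF_c f)(\lambda,\cdot)$ jointly in $\lambda$ (entire with horizontal decay) and in $x$ (as a distribution with estimates uniform on compacta of $X$), which requires a Paley--Wiener-type theorem for the relevant spherical transform on $G/H$ and explicit control over the meromorphic extension of $c(\lambda)$. A secondary point is that poles of $\mu$ may coincide with $\pm is\in\Sigma$ or with $\zeta=0$, producing the double poles and branch-point singularities mentioned in the introduction; detecting exactly when these coincidences occur depends on the explicit Plancherel formulas in each case $\F=\R,\C,\H,\O$, but the meromorphic-extension statement itself is independent of these case-specific details.
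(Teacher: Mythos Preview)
Your proposal is correct and essentially matches the paper's proof: the paper applies the resolvent to the inversion formula (Theorem~\ref{thm:InversionFormula}), notes that the discrete part $D(\zeta)$ is already meromorphic, and meromorphically extends the continuous integral $I(\zeta)$ by a contour shift (downward to $\R-iN$ rather than upward), picking up the residue of $(\nu^2-\zeta^2)^{-1}$ at $\nu=-\zeta$ together with residues from the poles of $(c(i\nu)c(-i\nu))^{-1}\varphi_{i\nu}$. The Paley--Wiener-type control you single out as the main obstacle is precisely Lemma~\ref{lem:GrowthSphericalDistributions}, established in Appendix~\ref{app:PaleyWiener}.
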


To describe the resonances of $\Box$, i.e. the poles of the meromorphic extension of $\widetilde{R}(\zeta)$, we divide the complex plane into three regions: the upper half plane $\{\IM\zeta>0\}$, the real line $\{\IM\zeta=0\}$ and the lower half plane $\{\IM\zeta<0\}$.

\begin{thmalph}[see Theorems~\ref{thm:ResonancesUpperHalfPlane}, \ref{thm:ResonancesRealLine}, \ref{thm:ResonancesLowerHalfPlane1}, \ref{thm:ResonancesLowerHalfPlane2}, \ref{thm:ResonancesLowerHalfPlane3} and \ref{thm:ResonancesLowerHalfPlane4}]
	The poles of $\widetilde{R}(\zeta)$ are:
	\begin{enumerate}
		\item In the upper half plane $\{\IM\zeta>0\}$ there are single poles at $\zeta=is$, $\rho^2-s^2\in\sigma_p(\Box)$.
		\item On the real line $\{\IM\zeta=0\}$ the only possible pole is $\zeta=0$, and this is indeed a single pole if and only if either $\F=\R$ and $p-q\in4\Z+2$ or $\F=\C$ and $p-q\in2\Z$.
		\item In the lower half plane $\{\IM\zeta<0\}$ the poles are:
		\begin{enumerate}
			\item For $\F=\R$ with $p$ and $q$ even or $\F=\C,\H,\O$ there are single poles at $-is$, $s\in\rho+2\N$.
			\item For $\F=\R$ with $p$ odd and $q$ even there are single poles at $-is$, where either $s\in\rho+2\N$ or $s\in(\rho+2\Z+1)\cap\R_+$.
			\item For $\F=\R$ with $p$ and $q$ odd there are no poles in the lower half plane.
			\item For $\F=\R$ with $p$ even and $q$ odd there are single poles at $-is$, $0<s<\rho$, $s\in\rho+2\Z$, and there are poles of order two at $-is$, $s\in\rho+2\N$.
		\end{enumerate}
	\end{enumerate}
\end{thmalph}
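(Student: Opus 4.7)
The plan is to reduce the problem to an explicit spectral representation of $\widetilde R(\zeta)$ and then deform the contour of integration in the spectral parameter. Starting from the Plancherel inversion formula of Theorem~\ref{thm:InversionFormula}, which schematically expresses $f \in C_c^\infty(X)$ as
$$
f = \int_{\R_+} \mathcal{E}_\lambda f \cdot \mu(\lambda)\,d\lambda + \sum_{s\,:\,\rho^2 - s^2\in\sigma_p(\Box)} P_s f,
$$
with $\mathcal{E}_\lambda f$ the spectral component at continuous parameter $\lambda^2+\rho^2$, $\mu(\lambda)$ the Plancherel density, and $P_s f$ the eigenspace projection at eigenvalue $\rho^2 - s^2$, I would apply $\widetilde R(\zeta)$ termwise for $\IM\zeta > 0$ to obtain
$$
\widetilde R(\zeta) f = \int_{\R_+} \frac{\mathcal{E}_\lambda f}{\lambda^2-\zeta^2}\mu(\lambda)\,d\lambda - \sum_{s} \frac{P_s f}{s^2+\zeta^2}.
$$
Part~(1) of the theorem then reads off directly from the second sum, whose simple poles at $\zeta = \pm is$ lie in the upper half plane exactly for $\zeta = is$.

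For parts~(2) and~(3), I would use the $\lambda \mapsto -\lambda$ symmetry of $\mathcal{E}_\lambda f \cdot \mu(\lambda)$ to rewrite the first integral over all of $\R$ (with an extra factor $\tfrac12$) and meromorphically continue by deforming the contour. The integrand has simple poles at $\lambda = \pm\zeta$, with $+\zeta$ above $\R$ and $-\zeta$ below for $\IM\zeta > 0$; as $\zeta$ crosses $\R$ into the lower half plane, I drag the contour to keep these two poles on opposite sides of it, obtaining a contour $\Gamma_\zeta$ on which the integral depends analytically on $\zeta$. To reach arbitrary $\zeta$ in the lower half plane, I then push $\Gamma_\zeta$ further downwards past the poles of $\mu(\lambda)$, picking up a residue at each crossing. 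A pole $\lambda_0 = -is_0$ of $\mu$ in the lower half $\lambda$-plane contributes a residue term with the factor $1/(\lambda_0^2 - \zeta^2)$, and hence a singularity of $\widetilde R(\zeta)$ at $\zeta = -is_0$ of the same order as the pole of $\mu$ at $\lambda_0$. The real-line pole at $\zeta = 0$ in part~(2) arises from the pinching of $\lambda = \pm\zeta$ at the origin as $\zeta \to 0$: the residue computation shows that this produces a simple pole in $\zeta$ precisely when $\mu(0) \neq 0$, and the parity conditions come from evaluating the Gamma-function product at $\lambda = 0$ in each case.

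Everything else reduces to a case-by-case determination of the poles of $\mu(\lambda)$ in the lower half plane, which I would carry out using the explicit formula from Theorem~\ref{thm:InversionFormula} together with standard Gamma-function identities. In cases~(a) and~(b) the reduction yields only simple poles of $\mu$ (distributed over two arithmetic progressions in~(b)); in case~(c) the Gamma factors cancel and $\mu$ extends without poles to the lower half plane; in case~(d) two independent Gamma factors of $\mu$ simultaneously have simple poles at $\lambda = -is$ for $s \in \rho + 2\N$, producing the order-two poles of $\widetilde R$. The main obstacle will be this bookkeeping, especially verifying the genuine double poles in case~(d): unlike the Riemannian setting, where $\mu(\lambda) = |c(\lambda)|^{-2}$ has only simple poles off the real axis, the non-Riemannian Plancherel density factors as a product $c_+(\lambda) c_-(\lambda)$ of two distinct Harish-Chandra-type functions, and one must check that for $\F = \R$ with $p$ even and $q$ odd these two factors have simultaneous simple poles (rather than one zero cancelling a pole of the other). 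Once the pole structure of $\mu$ is pinned down, the translation to poles of $\widetilde R(\zeta)$ and their orders is automatic from the residue-shifting recipe.
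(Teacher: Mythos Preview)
Your overall strategy---spectral decomposition followed by a contour shift in the continuous parameter---matches the paper's approach, and your treatment of parts~(1) and~(2) is essentially correct. However, your analysis of part~(3) has a genuine gap: you assume that the poles of $\widetilde R(\zeta)$ in the lower half plane are read off directly from the poles of the Plancherel density $\mu(\lambda)=\bigl(c(i\lambda)c(-i\lambda)\bigr)^{-1}$. Two mechanisms intervene that you have not accounted for.

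First, the spherical distribution $\varphi_s$ itself can vanish: by Lemma~\ref{lem:SphericalDistributions}\eqref{lem:SphericalDistributions3}, $\varphi_s=0$ precisely when $\F=\R$ with $q$ even or $\F=\C,\H,\O$ and $s\in\pm(\rho+2\N)$. The object whose poles matter is therefore $\bigl(c(s)c(-s)\bigr)^{-1}\varphi_s$, not $\mu$ alone. In particular, in case~(a) the density $\mu$ has \emph{double} poles at $s\in\rho+2\N$ (see Lemma~\ref{lem:SingularitiesCFunction}), contrary to your claim; these are reduced to simple poles because $\varphi_s$ vanishes there, and the residue is computed via $\varphi_s'=\eta_k+\theta_k$ from \eqref{eq:DerivativeZetaS}.

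Second, and more fundamentally, the residues picked up from the contour shift in $I(\zeta)$ can \emph{cancel} against the discrete-series poles of $D(\zeta)$. In case~(c) your claim that ``the Gamma factors cancel and $\mu$ extends without poles'' is false: $\mu$ does have simple poles at $s\in(\rho+2\Z+1)\cap\R_+$ (Lemma~\ref{lem:SingularitiesCFunction}), but these coincide exactly with $D_1$, and the paper shows the two contributions cancel (Section~\ref{sec:ResidueRepsPoddQodd}). The same cancellation removes the simple poles at $0<s<\rho$, $s\in\rho+2\Z$ in case~(a), and the $\theta_k$-part of the residue at $s\in\rho+2\N$ in cases~(a) and~(b). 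In case~(d) the double poles survive precisely because $q$ is odd, so $\varphi_s\neq0$ there, \emph{and} $D_2=\emptyset$, so there is no $\theta_k$-term to cancel against. Without tracking both the zeros of $\varphi_s$ and the interaction with the discrete spectrum, your bookkeeping would give the wrong answer in cases~(a), (b), and~(c).
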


In particular, all singularities of $\widetilde{R}(\zeta)$ are on the imaginary axis $\zeta=is$, $s\in\R$. The residue of $\widetilde{R}(\zeta)$ at $\zeta=is$ is a convolution operator of the form
$$ C_c^\infty(X)\to\calD'(X), \quad f\mapsto f*\varphi $$
for a so-called \emph{$H$-spherical distribution $\varphi\in\calD'(X)$}, i.e. $\varphi$ is an $H$-invariant eigendistribution: $\Box\varphi=-(s^2-\rho^2)\varphi$. Since $\Box$ is $G$-invariant, the resolvent $\widetilde{R}(\zeta)$ and its residues are $G$-equivariant, so their images form subrepresentations of $\calD'(X)$ called \emph{residue representations}. We identify the residue representations in all cases. The statement of the results requires a certain amount of notation, and we refer to Section~\ref{sec:ResonancesAndRepresentations} for details. At this point we remark that for $\F=\R$ with $pq$ even or $\F=\C,\H,\O$ the residue representation at $\zeta=-is$, $s=\rho+2k$, $k\in\N$, is finite-dimensional, while all other residue representations are infinite-dimensional. This is one of the main differences to the Riemannian situation, i.e. the case $p=1$, where all residue representations are finite-dimensional (see \cite{hilgert2009resonances}). Moreover, in the Riemannian case all poles are of order one while we also encounter poles of order two in the case $\F=\R$ with $p$ even and $q$ odd.\\

Our method of proof is very similar to the one employed by Hilgert--Pasquale~\cite{hilgert2009resonances}. We use the explicit spectral decomposition of $\Box$ on $L^2(X)$ which is equivalent to the decomposition of the left-regular representation of $G$ on $L^2(X)$ into irreducible unitary representations, also referred to as Plancherel formula. This formula is due to Faraut~\cite{faraut1979distributions} for the cases $\F=\R,\C,\H$ and due to Kosters~\cite{Kos83} for the exceptional case $\F=\O$. The key point is the explicit description of the Plancherel measure for the continuous part which has to be extended meromorphically using a shift of contour of integration. In this way, we pick up residues coming from poles of the Plancherel density. In contrast to the Riemannian case, some of these residues actually cancel with the residues arising from the discrete spectrum of $\Box$, leading for instance to the fact that for $\F=\R$ with $p$ and $q$ odd the resolvent $\widetilde{R}(\zeta)$ is holomorphic in the lower half plane.

Let us also mention some related work for the case of Riemannian symmetric spaces. In addition to the work of Hilgert--Pasquale~\cite{hilgert2009resonances} who treated all Riemannian symmetric spaces of rank one, there have been some attempts at Riemannian symmetric spaces of higher rank by Hilgert--Pasquale--Przebinda~\cite{HPP16,HPP17a,HPP17b}. They were able to treat most cases of rank two. Working in a different direction, Will~\cite{Wil03} and Roby~\cite{Rob22} studied resonances of the Laplacian acting on sections of vector bundles over Riemannian symmetric spaces of rank one.\\

One might wonder about an extension of our results to homogeneous vector bundles over the pseudo-Riemannian symmetric space $X$. In fact, the only homogeneous vector bundles which carry an invariant Hermitian metric are the ones associated with an irreducible representation of the compact factor $\U(1;\F)$ of $H$. The corresponding Plancherel formula for such vector bundles was obtained by Shimeno~\cite{Shi95}. We expect that using his work, similar methods as used in this paper are applicable. This might be particularly interesting for the case $\F=\R$ where $\U(1;\F)=\upO(1)=\{\pm1\}$, so functions on $X=\upO(p,q)/\upO(1)\times\upO(p-1,q)$ can be identified with \emph{even} functions on the hyperboloid $\upO(p,q)/\upO(p-1,q)$. To also treat \emph{odd} functions, one has to consider the line bundle associated with the non-trivial character of $\upO(1)$. For $q=1$ this would describe resonances on de Sitter space $\operatorname{dS}^n=\upO(n,1)/\upO(n-1,1)$ and for $p=2$ this would treat Anti-de Sitter space $\operatorname{AdS}^n=\upO(2,n-1)/\upO(1,n-1)$. We plan to study these cases in a subsequent paper.

\paragraph{Acknowledgements.} Both authors were partly supported by a research grant from the Villum Foundation (Grant No. 00025373), and, in addition to that, the second author was partly supported by the DFG Research Training Group 2491 ``Fourier Analysis and Spectral Theory''.

\section{Pseudo-Riemannian hyperbolic spaces}

We introduce pseudo-Riemannian hyperbolic spaces over $\F=\R,\C,\H,\O$, their isometry groups, polar coordinates and the Poisson kernel. For more details, we refer the reader to the work of Faraut~\cite{faraut1979distributions} and Shimeno~\cite{Shi95} for $\F=\R,\C,\H$ and of Kosters \cite{Kos83} for $\F=\O$. 

\subsection{Hyperbolic spaces and isometry groups}\label{sec:HyperbolicSpaces}

Let $\F=\R,\C,\H$ and write $d=\dim_\R\F=1,2,4$. We further let $p\geq1$ and $q\geq0$, write $n=p+q$ and consider on $\F^n$ the standard sesquilinear form of signature $(dp,dq)$ given by
$$ [x,y]=\overline{y_1}x_1+\cdots+\overline{y_p}x_p-\overline{y_{p+1}}x_{p+1}-\cdots-\overline{y_{p+q}}x_{p+q} \qquad  (x,y\in\F^n). $$
On the open subset $\{y\in\F^n:[y,y]>0\}$, the pseudo-Riemannian metric
$$ ds^2 = -\frac{[dy,dy]}{[y,y]} $$
is invariant under dilations $y\mapsto y\lambda$, $\lambda\in\F^\times=\F\setminus\{0\}$, and hence it induces a pseudo-Riemannian metric $g$ of signature $(dq,d(p-1))$ on the corresponding open subset
$$ X = \{[y]\in P^{n-1}(\F):[y,y]>0\}$$
in the projective space $P^{n-1}(\F)=(\F^{n}\setminus\{0\})/\F^\times$. This constructs a family of pseudo-Riemannian manifolds $(X,g)$ called \emph{hyperbolic spaces}. Note that $X$ is non-compact if and only if $q\geq1$. Moreover, $X$ is Riemannian if and only if either $p=1$ or $q=0$.

The group $G=\U(p,q;\mathbb{F})$ (i.e. $\upO(p,q)$ for $\F=\R$, $\U(p,q)$ for $\F=\C$ and $\Sp(p,q)$ for $\F=\H$) preserves the form $[\cdot,\cdot]$ and hence it acts on $X$. In fact, $G$ acts transitively on $X$, so we can identify $X\simeq G/H$, where $H=\U(1;\mathbb{F})\times\U(p-1,q;\mathbb{F})$ is the stabilizer of the base point $x_0=[(1,0,\ldots,0)]$. In this way, $X$ becomes a semisimple symmetric space (although the group $G$ itself is not semisimple for $\F=\C$, but can be replaced by the semisimple $\SU(p,q)$).

The symmetric spaces constructed in this way are all \emph{isotropic}, i.e. the isotropy group $H$ at $x_0$ acts transitively on each level set in $T_{x_0}X$ of the quadratic form induced by the metric $g$. Actually, the above construction gives all but three isotropic symmetric spaces (see \cite[page 382]{Wol67}). The missing ones can be constructed in a similar fashion using the octonions $\F=\O$. More precisely, we let $G=\U(p,q;\O)$ for $(p,q)\in\{(3,0),(2,1),(1,2)\}$, meaning $\U(3;\O)=F_4$, the compact simple Lie group of type $F_4$, or $\U(2,1;\O)=\U(1,2;\O)=F_{4(-20)}$, the non-compact simple Lie group of type $F_4$ and real rank one. Further, let $H=\U(p-1,q;\O)$, using the interpretation $\U(2;\O)=\Spin(9)$ and $\U(1,1;\F)=\Spin_0(1,8)$. Then $X=G/H$ is an isotropic symmetric space called \emph{octonionic hyperbolic space} (see \cite[Chapter 3.4]{Kos83} for details). For $p=3$, it is compact and Riemannian, for $p=1$ it is non-compact and Riemannian, and for $p=2$ it is non-compact and pseudo-Riemannian.

In this paper, we will only be concerned with non-compact pseudo-Riemannian hyperbolic spaces, so from now on we assume that $p\geq2$ and $q\geq1$.

\subsection{Polar coordinates}

Realizing $G$ as $n\times n$ matrices with entries in $\F$, we choose the maximal compact subgroup $K=\U(p;\F)\times\U(q;\F)$ of $G$ (resp. $K=\Spin(9)$ for $G=F_{4(-20)}$) and consider the one-parameter group $A=\{a_t:t\in\R\}$ given by
$$ a_t = \begin{pmatrix}\cosh t&&\sinh t\\&\0_{n-2}&\\\sinh t&&\cosh t\end{pmatrix} \qquad (t\in\R). $$
The map $K\times A\to X,\,(k,a)\mapsto ka\cdot x_0$ is surjective and induces a diffeomorphism onto an open dense subset of $X$ (see \cite[page 403]{faraut1979distributions} and \cite[Lemma 3.9.1]{Kos83}):
$$ K/M_0\times(0,\infty)\to X, \quad (kM_0,t)\mapsto ka_t\cdot x_0, $$
where
$$ M_0 = \left\{\begin{pmatrix}a&&&\\&B&&\\&&C&\\&&&a\end{pmatrix}:a\in\U(1,\F),B\in\U(p-1;\F),C\in\U(q-1;\F)\right\}\subseteq K $$
is the centralizer of $A$ in $K$. Note that $K/M_0\simeq(S^{dp-1}\times S^{dq-1})/\U(1;\F)$, where $d=\dim_\R\F\in\{1,2,4,8\}$. In these coordinates, the Riemannian measure $dx$ on $X$ takes the form (see \cite[page 403]{faraut1979distributions} and \cite[Lemma 3.9.1]{Kos83})
$$ dx = \const\times\,A(t)\,dt\,db, $$
where $A(t)=(\cosh t)^{dp-1}(\sinh t)^{dq-1}$, $db$ is the normalized $K$-invariant measure on $K/M_0$ and the constant only depends on the normalization of the measures.

\subsection{The isotropic cone and the Poisson kernel}

The \emph{isotropic cone} $\{y\in\F^n\setminus\{0\}:[y,y]=0\}$ is invariant under multiplication by $\lambda\in\U(1;\F)$ from the right and we denote by $\Xi$ the corresponding quotient:
$$ \Xi = \{\eta\in\F^n\setminus\{0\}:[\eta,\eta]=0\}/\U(1;\F). $$
$G$ acts transitively on $\Xi$ and the stabilizer of $\xi_0=[(1,0,\ldots,0,1)]$ is equal to $MN$, where
\begin{align*}
	M &= \left\{\begin{pmatrix}a&&\\&B&\\&&a\end{pmatrix}:a\in\U(1,\F),B\in\U(p-1,q-1;\F)\right\},\\
	N &= \exp\left\{\begin{pmatrix}w&z^*&-w\\z&\0_{n-2}&-z\\w&z^*&-w\end{pmatrix}:z\in\F^{n-2},w\in\IM\F\right\},
\end{align*}
with $z^*=-\overline{z}^\top\1_{p-1,q-1}$ and $\IM\F=\{w\in\F:\RE(w)=0\}$ (see \cite[page 391]{faraut1979distributions} and \cite[Chapter 3.5]{Kos83}). We therefore identify $\Xi\simeq G/MN$.

We introduce the \emph{Poisson kernel} $P:X\times\Xi\to\C$ by
$$ P(x,\xi) = \frac{\big|[y,\eta]\big|}{\sqrt{[y,y]}}, $$
where $x=[y]\in X$ and $\xi=[\eta]\in\Xi$. For fixed $\xi\in\Xi$, its complex powers (as functions of $x\in X$ where $P(x,\xi)\neq0$) are eigenfunctions of the Laplace--Beltrami operator $-\Box$ on $X$ (see \cite[page 394]{faraut1979distributions} and note that $\Box$ in our notation corresponds to $-\Box$ in Faraut's notation):
$$ \Box P(x,\xi)^{s-\rho} = -(s^2-\rho^2)P(x,\xi)^{s-\rho}, \qquad (s\in\C), $$
where $\rho=\frac{d(p+q)}{2}-1$.


\section{Harmonic analysis on $X$}

We recall from \cite{faraut1979distributions,Shi95} the Fourier transform, the Poisson transform, the corresponding spherical distributions and the inversion formula for the hyperbolic spaces $X$. For this whole section we assume $p\geq2$ and $q\geq1$.

\subsection{Degenerate principal series representations}

We briefly discuss the degenerate principal series representations that occur in the Plancherel formula for $X$. Recall the subgroups $M$, $A$ and $N$ of $G$. The product $P=MAN$ is a maximal parabolic subgroup of $G$. For $s\in\C$ we define a character of $P=MAN$ by
$$ \chi_s(ma_tn) = e^{st} \qquad (m\in M,t\in\R,n\in N) $$
and consider the induced representation $\pi_s=\Ind_P^G(\chi_{-s})$, realized as the left-regular representation on the space
$$ \calE_s(\Xi) = \{f\in C^\infty(\Xi):f(\lambda\xi)=\lambda^{s-\rho}f(\xi)\mbox{ for all }\xi\in\Xi,\lambda>0\}. $$

Let
$$ B=\{[y]\in\Xi:|y_1|^2+\cdots+|y_p|^2=|y_{p+1}|^2+\cdots+|y_{p+q}|^2=1\}, $$
then $\Xi=\R_+B$ and therefore every function $f\in\calE_s(\Xi)$ is uniquely determined by its restriction to $B$. Note that $K$ acts transitively on $B$ and the stabilizer of $b_0=[(1,0,\ldots,0,1)]$ equals $M_0$, so we can identify $B\simeq K/M_0$. In particular, there is a unique normalized $K$-invariant measure $db$ on $B$. The bilinear form
$$ \calE_s(\Xi)\times\calE_{-s}(\Xi)\to\C, \quad (f_1,f_2)\mapsto\langle f_1,f_2\rangle=\int_B f_1(b)f_2(b)\,db $$
is invariant under $\pi_s\times\pi_{-s}$ (see \cite[Proposition 5.1]{faraut1979distributions} and \cite[Chapter 3.6]{Kos83}).

Under the action of $K$, the space $L^2(B)$ decomposes into a direct sum of invariant subspaces
$$ L^2(B) \simeq \widehat{\bigoplus_{\ell,m\in\N}}\calY_{\ell,m}, $$
where
$$ \calY_{\ell,m} = \{f\in C^\infty(B):\Delta_1f=-\ell(\ell+dp-2)f,\Delta_2f=-m(m+dq-2)f\} $$
with $\Delta_1$ resp. $\Delta_2$ denoting the Laplacian on the unit sphere $S(\F^p)$ resp. $S(\F^q)$ that factors to $B\simeq(S(\F^p)\times S(\F^q))/U(1;\F)$. We note that $\calY_{\ell,m}$ is in general not irreducible under the action of $K$, but it turns out that every irreducible subquotient of $\calE_s(\Xi)$ decomposes into a direct sum of $\calY_{\ell,m}$'s. The set $\Lambda=\{(\ell,m)\in\N^2:\calY_{\ell,m}\neq\{0\}\}$ depends on $\F$ as well as $p$ and $q$ (see \cite[page 399]{faraut1979distributions} and \cite[Chapter 3.9]{Kos83}):
$$ \Lambda = \begin{cases}\{(\ell,m)\in\N^2:\ell\equiv m\mod2,m\in\{0,1\}\}&\mbox{if $q=1$ and $\F=\R$,}\\\{(\ell,m)\in\N^2:\ell\equiv m\mod2,m\leq\ell\}&\mbox{if $q=1$ and $\F=\C,\H,\O$,}\\\{(\ell,m)\in\N^2:\ell\equiv m\mod2\}&\mbox{if $q>1$.}\end{cases} $$

%

\subsection{The Fourier transform}\label{sec:FourierTransform}

For $f\in C_c^\infty(X)$ the following integral converges whenever $\RE(s)>\rho-d$:
$$ \calF_sf(\xi) = \frac{1}{\Gamma(\frac{s-\rho+d}{2})}\int_X P(x,\xi)^{s-\rho}f(x)\,dx. $$
By \cite[Proposition 7.1]{faraut1979distributions} and \cite[Chapter 3.9]{Kos83}, the function $(s,\xi)\mapsto\calF_sf(\xi)$ is entire in $s\in\C$ and smooth in $\xi$, so $\calF_sf\in\calE_s(\Xi)$. Moreover, $\calF_s(\Box f)=-(s^2-\rho^2)\calF_sf$. The function $\calF_sf$ is called \emph{Fourier transform} of $f$.

Every $K$-finite function $f\in C_c^\infty(X)$ can be written as a linear combination of functions of the form $f(ka_t\cdot x_0)=Y(k\cdot b_0)F(t)$ with $Y\in\calY_{\ell,m}$ and $F\in C_c^\infty(\R)$ even or odd (depending on whether $m$ is even or odd, see \cite[page 403]{faraut1979distributions}). For such functions we have
\begin{equation}
	\calF_sf(b) = \beta_{\ell,m}(s)Y(b)\int_0^\infty\Psi_{\ell,m}(t,s)F(t)A(t)\,dt,\label{eq:FourierOnKfinite}
\end{equation}
where
\begin{align*}
	\Psi_{\ell,m}(t,s) 
	={}& \sinh^m t\cosh^\ell t\\
	&\times{_2F_1}\left(\frac{\rho+s+m+\ell}{2},\frac{\rho-s+m+\ell}{2};m+\frac{dq}{2};-\sinh^2t\right)
\end{align*}
with ${_2F_1}(a,b;c;z)$ denoting the hypergeometric function and
$$ \beta_{\ell,m}(s) = \const\times(-1)^m\frac{(\frac{s-\rho}{2})(\frac{s-\rho}{2}-1)\cdots(\frac{s-\rho}{2}-\frac{m+\ell}{2}+1)}{\Gamma(m+\frac{dq}{2})\Gamma(\frac{s-\rho+\ell-m+dp}{2})}, $$
the constant being positive and  independent of $\ell$ and $m$ (see \cite[page 405]{faraut1979distributions}, \cite[Proposition 5.1]{Shi95} and \cite[Chapter 3.9]{Kos83}).


\subsection{The Poisson transform}\label{sec:PoissonTransform}

We further define the \emph{Poisson transform} $\calP_sg$ of a function $g\in\calE_s(\Xi)$:
$$ \calP_sg(x) = \frac{1}{\Gamma(\frac{-s-\rho+d}{2})}\int_BP(x,b)^{-s-\rho}g(b)\,db. $$
Then $\calP_sg\in C^\infty(X)$ is an eigenfunction of $\Box$:
$$ \Box(\calP_sg) = -(s^2-\rho^2)\calP_sg $$
and $\calP_sg$ depends holomorphically on $s\in\C$ (see \cite[Remark 4.5]{Shi95}). For $g=Y\in\calY_{\ell,m}$ we have (see \cite[Lemma 4.3]{Shi95}):
\begin{equation}
	\calP_sg(ka_t\cdot x_0) = \const\times\beta_{\ell,m}(-s)Y(k\cdot b_0)\Psi_{\ell,m}(t,s).\label{eq:PoissonOnKfinite}
\end{equation}


\subsection{Spherical distributions}

Following \cite[page 403]{faraut1979distributions}, we define a distribution $\varphi_s$ on $X$ by
\begin{equation}
	\langle\varphi_s,f\rangle = \calP_s(\calF_sf)(x_0) \qquad (f\in C_c^\infty(X)).\label{eq:DefZeta}
\end{equation}
Then
$$ \calP_s(\calF_sf) = f*\varphi_s, $$
the convolution of $f$ with $\varphi_s$. The distributions $\varphi_s$ are \emph{spherical}, i.e. they are $H$-invariant and eigendistributions of the Laplacian (see \cite[Proposition  5.4]{faraut1979distributions} and \cite[Proposition 3.7.2]{Kos83}).
$$ \Box\varphi_s=-(s^2-\rho^2)\varphi_s. $$
They have the following properties:

\begin{lem}[{see \cite[Th\'{e}or\`{e}me 7.3]{faraut1979distributions} and \cite[Proposition 3.9.4]{Kos83}}]\label{lem:SphericalDistributions}
	Assume $p\geq2$ and $q\geq1$.
	\begin{enumerate}[(1)]
		\item\label{lem:SphericalDistributions1} $\varphi_s$ is entire in $s\in\C$.
		\item\label{lem:SphericalDistributions2} $\varphi_s=\varphi_{-s}$ for all $s\in\C$.
		\item\label{lem:SphericalDistributions3} $\varphi_s=0$ if and only if either $\F=\R$ with $q$ even or $\F=\C,\H,\O$, and $s\in\pm(\rho+2\N)$.
	\end{enumerate}
\end{lem}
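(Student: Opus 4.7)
The plan is to work with the explicit formulas for $\calF_s$ and $\calP_s$ on $K$-finite functions from Sections~\ref{sec:FourierTransform} and~\ref{sec:PoissonTransform}. For part~(1), the Fourier transform $\calF_sf$ is entire in $s$ with values in $C^\infty(\Xi)$, so its restriction to the compact base $B$ is an entire family in $C^\infty(B)$. Evaluating the Poisson transform at $x_0$ gives
\[
\calP_s(\calF_sf)(x_0)=\frac{1}{\Gamma(\tfrac{-s-\rho+d}{2})}\int_B P(x_0,b)^{-s-\rho}(\calF_sf)(b)\,db,
\]
an integral over the compact manifold $B$ of a holomorphically varying smooth integrand, with the prefactor $1/\Gamma$ cancelling the singular values that could otherwise arise from the complex power. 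Hence $\langle\varphi_s,f\rangle$ is entire.

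For part~(2), reduce to $K$-finite $f$ of the form $f(ka_t\cdot x_0)=Y(k\cdot b_0)F(t)$ with $Y\in\calY_{\ell,m}$ and $F$ of the appropriate parity. Combining \eqref{eq:FourierOnKfinite} and \eqref{eq:PoissonOnKfinite} one obtains
\[
\langle\varphi_s,f\rangle = c_{\ell,m}\,\beta_{\ell,m}(s)\,\beta_{\ell,m}(-s)\,Y(b_0)\,\Psi_{\ell,m}(0,s)\int_0^\infty\Psi_{\ell,m}(t,s)F(t)A(t)\,dt
\]
with $c_{\ell,m}$ independent of $s$. The product $\beta_{\ell,m}(s)\beta_{\ell,m}(-s)$ is trivially $s\mapsto-s$-invariant, and $\Psi_{\ell,m}(t,s)$ shares the symmetry because the hypergeometric function ${_2F_1}(a,b;c;z)$ is symmetric in its first two parameters and $s\mapsto-s$ merely interchanges $(\rho+s+m+\ell)/2$ with $(\rho-s+m+\ell)/2$. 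Thus $\varphi_s=\varphi_{-s}$ on $K$-finite $f$, and by density on all of $C_c^\infty(X)$.

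For part~(3), note first that $\Psi_{\ell,m}(0,s)=\sinh^m(0)\cosh^\ell(0)\cdot{_2F_1}(\cdots;0)=0$ whenever $m\geq1$, so only $K$-types with $m=0$ contribute, and the parity constraint in $\Lambda$ then forces $\ell$ even. For each such $\ell$ one can choose $Y\in\calY_{\ell,0}$ (a zonal spherical harmonic on $S(\F^p)$) with $Y(b_0)\neq0$ and $F$ making the hypergeometric integral nonzero, so $\varphi_s=0$ iff $\beta_{\ell,0}(s)\beta_{\ell,0}(-s)=0$ for every even $\ell\geq0$. Writing out $\beta_{\ell,0}$ and using $\rho=d(p+q)/2-1$, evaluate at $s=\rho+2k$, $k\in\N$: when $\ell\geq2k+2$, the Pochhammer-type numerator of $\beta_{\ell,0}(s)$ has a factor $(\tfrac{s-\rho}{2}-k)=0$; when $\ell\leq2k$, the reciprocal Gamma factor in $\beta_{\ell,0}(-s)$ reduces to $1/\Gamma\big(\tfrac{2-dq-2k+\ell}{2}\big)$, which vanishes exactly when the argument is a non-positive integer. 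Since $\ell$ is even, this latter condition holds for every admissible $\ell\leq2k$ if and only if $dq$ is even, i.e.\ iff $\F=\R$ with $q$ even or $\F\in\{\C,\H,\O\}$. The symmetry from part~(2) then propagates the vanishing to all $s\in\pm(\rho+2\N)$.

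The main technical obstacle is the converse direction. For $\F=\R$ with $q$ odd, one needs $\varphi_s\neq0$ at $s=\pm(\rho+2k)$, which follows from the $(\ell,m)=(0,0)$ $K$-type via a direct Gamma-parity check showing $\beta_{0,0}(\rho+2k)\beta_{0,0}(-\rho-2k)\neq0$. For the ``good'' cases but $s\notin\pm(\rho+2\N)$, one has to exhibit some even $\ell$ for which $\beta_{\ell,0}(\pm s)\neq0$. This reduces to a comparison of two arithmetic progressions of singularities, the Pochhammer zeros in the numerator at $\rho+\{0,2,\ldots,\ell-2\}$ versus the Gamma-pole zeros from the denominator at $\rho-dp-\ell-2\N$, and the subtle point is that as $\ell$ ranges over even non-negative integers the common zero set of $\beta_{\ell,0}(s)\beta_{\ell,0}(-s)$ collapses exactly to $\pm(\rho+2\N)$ in the good cases while being empty in the bad one. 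This case-by-case combinatorial bookkeeping, already carried out in \cite{faraut1979distributions,Kos83}, is the technical heart of the argument.
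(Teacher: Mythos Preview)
The paper does not prove this lemma; it simply records it with citations to \cite{faraut1979distributions} and \cite{Kos83}. Your sketch is essentially the argument found in those references: reduce to $K$-finite test functions, use the explicit formulas \eqref{eq:FourierOnKfinite} and \eqref{eq:PoissonOnKfinite} to express $\langle\varphi_s,f\rangle$ as a multiple of $\beta_{\ell,m}(s)\beta_{\ell,m}(-s)$, and then analyse the zero set of this product. Parts~(2) and~(3) are handled correctly, and your honest deferral of the final arithmetic in~(3)---that $\bigcap_{\ell\text{ even}}\{s:\beta_{\ell,0}(s)\beta_{\ell,0}(-s)=0\}$ equals $\pm(\rho+2\N)$ when $dq$ is even and is empty when $dq$ is odd---matches exactly what the cited works supply.

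There is one genuine slip in part~(1). The integrand $b\mapsto P(x_0,b)^{-s-\rho}$ is \emph{not} smooth on all of $B$: since $P(x_0,b)=|\eta_1|$, it vanishes on the positive-codimension submanifold $\{\eta_1=0\}$, and the complex power is singular there for $\RE s>-\rho$. So the displayed integral does not converge for all $s$, and ``integral of a holomorphically varying smooth integrand over a compact manifold'' does not apply. The $1/\Gamma$ prefactor does not make the integrand smooth; rather, the integral itself must first be analytically continued in $s$ (as a distribution-valued function, via the standard theory of complex powers), and only then does $1/\Gamma$ cancel the resulting poles. A cleaner way to get~(1) from what the paper already establishes is to compose the entirety of $s\mapsto\calF_sf\in C^\infty(B)$ (Section~\ref{sec:FourierTransform}) with the holomorphicity of $s\mapsto\calP_sg$ (Section~\ref{sec:PoissonTransform}), noting that the pairing $(s,g)\mapsto\calP_sg(x_0)$ is jointly holomorphic in $s$ and continuous linear in $g$.
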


In the case where $\varphi_s=0$, i.e. $s=\pm(\rho+2k)$, the space of spherical distributions is two-dimensional and spanned by two distributions $\eta_k$ and $\theta_k$ (see \cite[Chapter VIII~(1)]{faraut1979distributions} and \cite[Chapter 3.11]{Kos83} for their precise definition). As stated in \cite[end of Chapter VIII]{faraut1979distributions} and \cite[Chapter 3.11]{Kos83}:
\begin{equation}
	\left.\frac{d}{ds}\right|_{s=\rho+2k}\varphi_s = \eta_k+\theta_k.\label{eq:DerivativeZetaS}
\end{equation}

\subsection{The inversion formula}

Let
\begin{equation}\label{eq:cfunction}
	c(s)=\frac{\Gamma(dp/2)\Gamma(d
		q/2)}{\sqrt{\pi}}\frac{2^{\rho-s}\Gamma(s)}{\Gamma((s+\rho)/2)\Gamma((s+dp-\rho/2))\Gamma((s+dq-\rho/2))}.
\end{equation}

%

\begin{thm}\label{thm:InversionFormula}
	For $f\in C_c^\infty(X)$ the following inversion formula holds:
	\begin{enumerate}[(1)]
		\item For $\mathbb{F}=\R$ and $q$ odd:
		\begin{multline}
			f(x) = \frac{1}{4\pi} \int_\R (f*\varphi_{i\nu})(x)\frac{d\nu}{\arrowvert c(i\nu)\arrowvert^2}\\
			+\sum_{\rho+2k+1>0}(f*\varphi_{\rho+2k+1})(x)\Res_{s=\rho+2k+1}\frac{1}{c(s)c(-s)}.
		\end{multline}
		\item For $\mathbb{F}=\R$ and $q$ even, or $\mathbb{F}=\C,\H$:
		\begin{multline}
			f(x) = \frac{1}{4\pi}\int_\R (f*\varphi_{i\nu})(x)\frac{d\nu}{\arrowvert c(i\nu)\arrowvert^2}\\
			+\sum_{0<\rho+2k<\rho}(f*\varphi_{\rho+2k})(x)\Res_{s=\rho+2k}\frac{1}{c(s)c(-s)}\\
			+\sum_{\rho+2k\geq \rho}(f*\theta_k)(x)c_{-2}\bigg[\frac{1}{c(s)c(-s)};\rho+2k\bigg].
		\end{multline}
	\end{enumerate}
\end{thm}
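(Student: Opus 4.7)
The plan is to deduce this inversion formula from the Plancherel decomposition of $L^2(X)$ proved by Faraut~\cite{faraut1979distributions} for $\F=\R,\C,\H$ and by Kosters~\cite{Kos83} for $\F=\O$, recast here in terms of the spherical distributions $\varphi_s$, $\eta_k$, $\theta_k$ and the $c$-function \eqref{eq:cfunction}. The form chosen is convenient for the meromorphic continuation of the resolvent carried out later, since each term corresponds transparently to a spectral component of $\Box$ that will be tracked via the poles of $1/(c(s)c(-s))$.

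I would first reduce to $K$-finite $f$, using the decomposition $L^2(B) \simeq \widehat{\bigoplus}_{(\ell,m) \in \Lambda} \calY_{\ell,m}$. On functions of the form $f(ka_t \cdot x_0) = Y(k \cdot b_0) F(t)$ with $Y \in \calY_{\ell,m}$ and $F \in C_c^\infty(\R)$ of the appropriate parity, the formulas \eqref{eq:FourierOnKfinite} and \eqref{eq:PoissonOnKfinite} express $(f * \varphi_s)(ka_t \cdot x_0)$ as a scalar multiple of $Y(k \cdot b_0) \beta_{\ell,m}(s)\beta_{\ell,m}(-s) \Psi_{\ell,m}(t,s)$ paired with the Jacobi-type transform of $F$ against $\Psi_{\ell,m}(\cdot, s)$. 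The desired identity then becomes a scalar Mehler--Fock-type inversion for Jacobi functions with Plancherel density $1/|c(i\nu)|^2$, which is precisely the content of the Faraut and Kosters inversion theorems on $\R_+$.

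Next I would identify the discrete contributions from the pole structure of $1/(c(s)c(-s))$. The Gamma factors in \eqref{eq:cfunction}, evaluated at $\pm s$, produce poles at $s = \rho + 2k + 1$ when $\F = \R$ and $q$ is odd (Case 1), and at $s = \rho + 2k$ otherwise (Case 2). In Case 1, Lemma~\ref{lem:SphericalDistributions}\eqref{lem:SphericalDistributions3} gives $\varphi_{\rho + 2k + 1} \neq 0$, so ordinary residue calculus after a contour shift produces the discrete sum. In Case 2, for small positive values of $\rho + 2k$ the same mechanism applies, but for $\rho + 2k \geq \rho$ the lemma forces $\varphi_{\rho + 2k} = 0$ and by \eqref{eq:DerivativeZetaS} the map $s \mapsto \varphi_s$ acquires a simple zero there. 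For the residue of $(f*\varphi_s)/(c(s)c(-s))$ to remain nontrivial, the density $1/(c(s)c(-s))$ must develop a compensating double pole, and Laurent expansion around $s=\rho+2k$ then yields the $\theta_k$-term with coefficient $c_{-2}[1/(c(s)c(-s)); \rho + 2k]$.

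The main obstacle is the bookkeeping in this last step: one has to verify (a) that $1/(c(s)c(-s))$ genuinely has a pole of order two at $s = \rho + 2k \geq \rho$ in Case 2, by counting coincident Gamma singularities in \eqref{eq:cfunction}, and (b) that in the residue expansion of $(f*\varphi_s)/(c(s)c(-s))$ only the $\theta_k$-component survives while the $\eta_k$-component cancels. Both rely on the explicit realization of $\eta_k$ and $\theta_k$ given in \cite[Chapter VIII]{faraut1979distributions} and \cite[Chapter 3.11]{Kos83}, together with a careful comparison of the residue picked up from shifting the contour against the contribution of the first discrete sum. Once the identity is established on $K$-finite $f$, it extends to all $f \in C_c^\infty(X)$ by density and continuity of the operators involved.
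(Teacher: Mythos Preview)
Your approach matches the paper's: for $K$-finite $f$ the formula is read off from the Plancherel theorems of Faraut and Kosters (the paper also points to \cite[Theorem 5.2]{Shi95} where it appears in essentially this form), and then one extends to all of $C_c^\infty(X)$. The one place where you are vaguer than the paper is the last step: ``density and continuity of the operators involved'' hides the real issue, namely that the continuous part $\int_\R (f*\varphi_{i\nu})(x)\,|c(i\nu)|^{-2}\,d\nu$ must be shown to converge absolutely and depend continuously on $f$. The paper handles this by invoking Lemma~\ref{lem:GrowthSphericalDistributions} (arbitrary polynomial decay of $(f*\varphi_{i\nu})(x)$ in $\nu$, uniformly on compacta) together with Lemma~\ref{lem:DecayCFunction} (at most polynomial growth of $|c(i\nu)|^{-2}$); you should cite these estimates rather than appeal to abstract continuity. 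Your middle discussion of the double-pole bookkeeping and the $\eta_k/\theta_k$ split is not actually needed at this stage, since the $\theta_k$-terms already appear in that form in the cited Plancherel formulas; that analysis only becomes relevant later in Section~\ref{sec:ResonancesAndRepresentations}.
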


Here, $c_{-2}[h(s);s_0]$ denotes the coefficient of $(s-s_0)^{-2}$ in the Laurent expansion of a meromorphic function $h(s)$ around $s_0$.

\begin{proof}
	For $K$-finite $f\in C_c^\infty(X)$ the inversion formula can either be deduced from the Plancherel formula in \cite[Th\'{e}or\`{e}me 10]{faraut1979distributions} and \cite[Theorem 3.13.1]{Kos83}, or can be found in \cite[Theorem 5.2]{Shi95}. Using the estimates in Lemma~\ref{lem:GrowthSphericalDistributions} and \ref{lem:DecayCFunction} it can be extended to all $f\in C_c^\infty(X)$.
\end{proof}

To simplify notation, we put
$$ D=D_1\cup D_2, $$
where
\begin{align*}
	D_1 &= \begin{cases}\{s>0:s\in\rho+2\Z+1\}&\mbox{for $\F=\R$ and $q$ odd,}\\\{s\in(0,\rho):s\in\rho+2\Z\}&\mbox{for $\F=\R$ and $q$ even or $\F=\C,\H,\O$,}\end{cases}\\
	D_2 &= \begin{cases}\emptyset&\mbox{for $\F=\R$ and $q$ odd,}\\\rho+2\N&\mbox{for $\F=\R$ and $q$ even or $\F=\C,\H,\O$,}\end{cases}
\end{align*}
then the inversion formula reads
\begin{multline*}
	f(x) = \frac{1}{4\pi}\int_\R(f*\varphi_{i\nu})(x)\frac{d\nu}{|c(i\nu)|^2} + \sum_{s\in D_1}(f*\varphi_s)(x)\Res_{\sigma=s}\frac{1}{c(\sigma)c(-\sigma)}\\
	+ \sum_{\rho+2k\in D_2}(f*\theta_k)(x)c_{-2}\left[\frac{1}{c(\sigma)c(-\sigma)};\rho+2k\right].
\end{multline*}

\subsection{Images of Poisson transforms}

In order to determine the residue representations in Section~\ref{sec:ResonancesAndRepresentations}, we need to identify the representation of $G$ on the image of the map
$$ C_c^\infty(X)\to C^\infty(X), \quad f\mapsto f*\varphi_s $$
for some values of $s$. We start with the values $s$ for which $s^2-\rho^2$ is an eigenvalue of $-\Box$ on $L^2(X)$.

\begin{prop}[{see \cite[Chapter VIII~(2)]{faraut1979distributions}}]\label{prop:PoissonImages2}
	\begin{enumerate}[(1)]
		\item\label{prop:PoissonImages21} Assume that either $\F=\R$ with $q$ odd and $s>0$, $s\in\rho+2\Z+1$, or that either $\F=\R$ with $q$ even or $\F=\C,\H,\O$ and $0<s<\rho$, $s\in\rho+2\Z$. Then the image of the map $f\mapsto f*\varphi_s$ is the relative discrete series representation $\{u\in L^2(X):\Box u=-(s^2-\rho^2)u\}$.
		\item Assume that $\F=\R$ with $q$ even or $\F=\C,\H,\O$ and $s=\rho+2k\in\rho+2\N$.
		\begin{enumerate}[(a)]
			\item\label{prop:PoissonImages22} The image of the map $f\mapsto f*\eta_k$ is isomorphic to the finite-dimensional representation of $G$ that occurs as subrepresentation of $\calE_{\rho+2k}(\Xi)$ and consists of the $K$-modules $\calY_{\ell,m}$, $\ell+m\leq2k$.
			\item\label{prop:PoissonImages22b} The image of the map $f\mapsto f*\theta_k$ is the relative discrete series representation $\{u\in L^2(X):\Box u=-4k(\rho+k)u\}$ that is equivalent to the unique irreducible quotient of $\calE_{\rho+2k}(\Xi)$ consisting of the $K$-modules $\calY_{\ell,m}$, $\ell-m\geq dq+2k$.
		\end{enumerate}
	\end{enumerate}
\end{prop}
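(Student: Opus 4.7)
The starting point is the identity $f * \varphi_s = \calP_s(\calF_s f)$ from \eqref{eq:DefZeta}, so the image of $f \mapsto f * \varphi_s$ coincides with the image of the composition $\calP_s \circ \calF_s$, a $G$-equivariant map into the $-(s^2 - \rho^2)$-eigenspace of $\Box$. By \eqref{eq:FourierOnKfinite}--\eqref{eq:PoissonOnKfinite}, for a $K$-finite $f$ of the form $f(k a_t \cdot x_0) = Y(k \cdot b_0) F(t)$ with $Y \in \calY_{\ell,m}$ one has
\[
	(f * \varphi_s)(k a_t \cdot x_0) = C_{\ell,m}(s)\, Y(k \cdot b_0)\, \Psi_{\ell,m}(t, s) \int_0^\infty \Psi_{\ell,m}(t', s)\, F(t')\, A(t')\, dt',
\]
where $C_{\ell,m}(s) = \const \cdot \beta_{\ell,m}(s) \beta_{\ell,m}(-s)$. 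Hence the $(\ell,m)$-component of the image is either zero (when $C_{\ell,m}(s) = 0$) or the line spanned by $(k,t) \mapsto Y(k \cdot b_0) \Psi_{\ell,m}(t,s)$. The problem reduces to tracking the vanishing locus of $C_{\ell,m}(s)$ in $(\ell,m)$ and the $L^2$-integrability of $\Psi_{\ell,m}(\cdot, s)$ against $A(t)\,dt$ on $(0,\infty)$.

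For Part~(1), i.e.\ $s \in D_1$, I would check directly from the hypergeometric expression that $\Psi_{\ell,m}(\cdot, s)$ is square-integrable against $A(t)\,dt$ precisely for those $(\ell,m)$ with $C_{\ell,m}(s) \neq 0$, and that these $K$-types agree with the known $K$-spectrum of the relative discrete series eigenspace $\{u \in L^2(X) : \Box u = -(s^2-\rho^2) u\}$. Using the Plancherel inversion formula of Theorem~\ref{thm:InversionFormula}, the $L^2$-orthogonal projection onto this eigenspace is a non-zero multiple of $f \mapsto f * \varphi_s$, yielding the claimed equality. For Part~(2), at $s = \rho + 2k$ the explicit formula for $\beta_{\ell,m}$ shows that $\beta_{\ell,m}(s)$ has a simple zero exactly when $\ell + m > 2k$, while through the gamma factor $\beta_{\ell,m}(-s)$ vanishes exactly when $\ell - m \geq dq + 2k$; the combination gives $\varphi_{\rho+2k} = 0$ in accordance with Lemma~\ref{lem:SphericalDistributions}(\ref{lem:SphericalDistributions3}). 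Differentiating as in \eqref{eq:DerivativeZetaS} and separating according to which factor $\beta_{\ell,m}(\pm s)$ carries the vanishing order, one finds that $f * \eta_k$ is non-zero only on $K$-types with $\ell + m \leq 2k$ and $f * \theta_k$ only on $K$-types with $\ell - m \geq dq + 2k$. These two $K$-spectra match the finite-dimensional subrepresentation $F_k$ and the unique infinite-dimensional irreducible quotient $Q_k$ in the composition series of $\calE_{\rho+2k}(\Xi)$; by $G$-equivariance and irreducibility of $F_k$ and $Q_k$, the images are $G$-isomorphic to these subquotients. The $L^2$-integrability of $\Psi_{\ell,m}(\cdot, \rho+2k)$ for $\ell - m \geq dq + 2k$ places the image of $f \mapsto f*\theta_k$ inside the relative discrete series at eigenvalue $-4k(\rho+k)$.

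The main obstacle is the clean split of $\frac{d}{ds}\big|_{s=\rho+2k} \varphi_s$ into the canonical pieces $\eta_k$ and $\theta_k$ and the identification of each with the correct composition factor of $\calE_{\rho+2k}(\Xi)$. Faraut \cite[Chapter VIII]{faraut1979distributions} and Kosters \cite[Chapter 3.11]{Kos83} achieve this via explicit residues of certain boundary integrals and an analysis of $H$-invariants in the dual of the degenerate principal series; the plan is to quote those constructions rather than redo them, and then match the $K$-type combinatorics above with the resulting composition series.
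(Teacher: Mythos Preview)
Your overall strategy matches the paper's: parts~\eqref{prop:PoissonImages21} and \eqref{prop:PoissonImages22b} are consequences of the inversion formula (Theorem~\ref{thm:InversionFormula}), since the maps $f\mapsto f*\varphi_s$ resp.\ $f\mapsto f*\theta_k$ \emph{are} (up to a nonzero constant) the $L^2$-orthogonal projections onto the relevant eigenspaces; part~\eqref{prop:PoissonImages22} is quoted from Faraut and Kosters. You invoke exactly this for \eqref{prop:PoissonImages21}, so that piece is fine. For \eqref{prop:PoissonImages22b} you take a longer route through $K$-type combinatorics and $L^2$-integrability of $\Psi_{\ell,m}(\cdot,\rho+2k)$; this can be made to work, but the one-line argument via the inversion formula is what the paper actually does and is considerably cleaner.

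There is one concrete error in your $K$-type bookkeeping. At $s=\rho+2k$ the gamma factor in $\beta_{\ell,m}(-s)$ has argument $\tfrac{\ell-m-dq+2-2k}{2}$, so $\beta_{\ell,m}(-s)=0$ precisely when $\ell-m<dq+2k$, not when $\ell-m\geq dq+2k$ as you wrote. With your stated inequality the product $\beta_{\ell,m}(s)\beta_{\ell,m}(-s)$ would be nonzero on the whole finite-dimensional block $\ell+m\leq 2k$, contradicting $\varphi_{\rho+2k}=0$. The correct picture is: $\beta_{\ell,m}(s)=0\Leftrightarrow\ell+m>2k$ and $\beta_{\ell,m}(-s)=0\Leftrightarrow\ell-m<dq+2k$, and then differentiating the product gives a nonzero contribution exactly on the two disjoint regions $\{\ell+m\leq 2k\}$ (where only $\beta_{\ell,m}(-s)$ vanishes) and $\{\ell-m\geq dq+2k\}$ (where only $\beta_{\ell,m}(s)$ vanishes). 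Your identification of these two regions with $\eta_k$ and $\theta_k$ is then correct, but note that matching which piece is $\eta_k$ and which is $\theta_k$ genuinely requires the definitions in \cite[Chapter~VIII]{faraut1979distributions} and \cite[Chapter~3.11]{Kos83}, as you acknowledge; the derivative argument alone only gives the $K$-type support of the sum.
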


\begin{proof}
	\eqref{prop:PoissonImages21} and \eqref{prop:PoissonImages22b} follow from the inversion formula in Theorem~\ref{thm:InversionFormula}, and \eqref{prop:PoissonImages22} is proven in \cite[Chapter VIII~(1)]{faraut1979distributions} and \cite[proof of Lemma 3.11.2]{Kos83}.
\end{proof}

We also need to study the image of $f\mapsto f*\varphi_s$ for some other values of $s$.

\begin{prop}\label{prop:PoissonImages1}
	\begin{enumerate}[(1)]
		\item\label{prop:PoissonImages1a} Assume that either $\F=\R$ and $p-q\in4\Z+2$ or that $\F=\C$ and $p-q\in2\Z+1$. For $q>1$ and $s=0$, the unitary degenerate principal series $\calE_0(\Xi)$ decomposes into the direct sum $\calE_0^+(\Xi)\oplus\calE_0^-(\Xi)$ of two irreducible subrepresentations consisting of the $K$-modules
		$$ \calE_0^+(\Xi) = \widehat{\bigoplus_{\ell-m+d(p-q)\geq0}}\calY_{\ell,m} \qquad \mbox{and} \qquad \calE_0^-(\Xi) = \widehat{\bigoplus_{\ell-m+d(p-q)\leq-2}}\calY_{\ell,m}. $$
		The image of the map $f\mapsto f*\varphi_s$ is equivalent to $\calE_0^+(\Xi)$. For $q=1$, the unitary degenerate principal series $\calE_0(\Xi)$ is irreducible and the image of the map $f\mapsto f*\varphi_s$ is equivalent to $\calE_0(\Xi)$.
		\item\label{prop:PoissonImages1b} Assume that $\F=\R$ with $p$ odd and $q$ even and $s\in\rho+2\Z+1$, $s>0$. The image of the map $f\mapsto f*\varphi_s$ is equivalent to the subrepresentation of $\calE_s(\Xi)$ with $K$-types $\calY_{\ell,m}$ for $m-\ell<s-\rho+p$ occurring with multiplicity one. This subrepresentation is not unitarizable.
		\item\label{prop:PoissonImages1c} Assume that $\F=\R$ with $p$ even and $q$ odd and $s\in\rho+2\Z$, $s>0$. The image of the map $f\mapsto f*\varphi_s$ is for $0<s<\rho$ equivalent to the subrepresentation of $\calE_s(\Xi)$ with $K$-types $\calY_{\ell,m}$ for $m-\ell<s-\rho+p$ occurring with multiplicity one, and for $s\geq\rho$ equivalent to the finite-dimensional subrepresentation of $\calE_s(\Xi)$ with $K$-types $\calY_{\ell,m}$ for $\ell+m\leq s-\rho$ occurring with multiplicity one. Apart from the trivial representation which occurs for $s=\rho$, these subrepresentations are not unitarizable.
	\end{enumerate}
\end{prop}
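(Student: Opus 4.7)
The plan is to use the factorization $f*\varphi_s=\calP_s(\calF_s f)$ supplied by \eqref{eq:DefZeta} together with the explicit action of the Fourier and Poisson transforms on $K$-finite functions given in \eqref{eq:FourierOnKfinite} and \eqref{eq:PoissonOnKfinite}. Concretely, for a $K$-finite $f$ with angular part $Y\in\calY_{\ell,m}$ and radial part $F$, composing the two formulas yields
$$ (f*\varphi_s)(ka_t\cdot x_0) = C\cdot\beta_{\ell,m}(s)\beta_{\ell,m}(-s)\Big(\int_0^\infty \Psi_{\ell,m}(t',s)F(t')A(t')\,dt'\Big) Y(kb_0)\Psi_{\ell,m}(t,s), $$
with a positive constant $C$ independent of $(\ell,m)$. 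Hence the $K$-type $\calY_{\ell,m}$ appears in the image of $f\mapsto f*\varphi_s$ exactly when $\beta_{\ell,m}(s)\beta_{\ell,m}(-s)\neq 0$ (the radial integral being non-zero for some $F$ whenever $\Psi_{\ell,m}(\cdot,s)\not\equiv 0$, which holds throughout). The task then splits into (i) reading off the vanishing locus of $\beta_{\ell,m}(\pm s)$ from its gamma-function expression, and (ii) matching the resulting $K$-type support with the subrepresentation of $\calE_s(\Xi)$ claimed in each case.

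For case \eqref{prop:PoissonImages1a} with $s=0$, the numerator of $\beta_{\ell,m}(0)$ does not vanish (since $\rho>0$ forbids any factor of the Pochhammer-style product from being zero), so $\beta_{\ell,m}(0)\neq 0$ reduces to the absence of a pole in $\Gamma\big(\tfrac{-\rho+\ell-m+dp}{2}\big)$, i.e.\ to a linear condition on $\ell-m$. Under the stated arithmetic hypothesis on $p-q$ the unitary degenerate principal series $\calE_0(\Xi)$ splits into two irreducible constituents $\calE_0^\pm(\Xi)$ with the stated $K$-type description; this is a classical fact for these families of classical (and exceptional) groups, which I would quote from \cite{faraut1979distributions} and \cite{Kos83}. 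Matching the computed $K$-type support with $\calE_0^+(\Xi)$ then settles this case. For $q=1$ the constraints on $\Lambda$ force $\ell\geq m$, no splitting occurs and the image coincides with the irreducible $\calE_0(\Xi)$.

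In cases \eqref{prop:PoissonImages1b} and \eqref{prop:PoissonImages1c}, the value $s>0$ now lies in $\rho+2\Z+1$ resp.\ $\rho+2\Z$, so the product $\big(\tfrac{s-\rho}{2}\big)\big(\tfrac{s-\rho}{2}-1\big)\cdots\big(\tfrac{s-\rho}{2}-\tfrac{\ell+m}{2}+1\big)$ in the numerator of $\beta_{\ell,m}(s)$ may vanish as well and must be weighed against the denominator poles simultaneously for $\pm s$. Carrying this out, the bound $m-\ell<s-\rho+p$ should fall out, with the further restriction $\ell+m\leq s-\rho$ appearing for $s\geq\rho$ in \eqref{prop:PoissonImages1c} (and collapsing to the trivial representation at $s=\rho$). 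Identifying the resulting $K$-type package with a subrepresentation of $\calE_s(\Xi)$ uses the known composition structure of the degenerate principal series at these reducibility points. For the non-unitarizability assertions, I would combine the invariant bilinear pairing of \cite[Proposition 5.1]{faraut1979distributions} with the standard intertwining operator $\calE_{-s}(\Xi)\to\calE_s(\Xi)$ to equip the subrepresentation with a $G$-invariant Hermitian form; on each $\calY_{\ell,m}$ this form is a scalar multiple of the standard inner product with a sign determined by the ratio $\beta_{\ell,m}(-s)/\beta_{\ell,m}(s)$, and exhibiting two $K$-types whose signs differ (outside the trivial case $s=\rho$) shows the form is indefinite, ruling out unitarity.

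The main obstacle I foresee is the delicate case-by-case bookkeeping of the parities of $dp$, $dq$, $\ell-m$ and $\tfrac{s-\rho}{2}$ that is needed both for the vanishing analysis of $\beta_{\ell,m}(\pm s)$ and for aligning it with the correct composition factor of $\calE_s(\Xi)$ at the reducibility point. A secondary difficulty is that the composition structure of $\calE_s(\Xi)$ at the non-generic values of $s$ appearing in (a), (b), (c) is representation-theoretic input from the literature; one must confirm that the submodules described in the statement are indeed the ones singled out by the $K$-type analysis, especially in the exceptional case $\F=\O$ through \cite{Kos83}.
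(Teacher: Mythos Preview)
Your proposal is correct and follows essentially the same approach as the paper: factor $f*\varphi_s=\calP_s(\calF_s f)$ via \eqref{eq:DefZeta}, \eqref{eq:FourierOnKfinite}, \eqref{eq:PoissonOnKfinite} and reduce the identification of the image to the vanishing locus of $\beta_{\ell,m}(\pm s)$. The only notable difference is that the paper outsources the composition structure and (non-)unitarizability statements for $\calE_s(\Xi)$ to Howe--Tan \cite{HT93} rather than arguing directly via intertwining operators and sign changes; also note that in parts \eqref{prop:PoissonImages1b} and \eqref{prop:PoissonImages1c} the paper observes that $\beta_{\ell,m}(-s)\neq 0$ for all $(\ell,m)$ when $s>0$, so the Poisson transform $\calP_s$ is injective on $K$-types and only the zeros of $\beta_{\ell,m}(s)$ matter, which streamlines your bookkeeping.
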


\begin{proof}
	The statements about the irreducibility and unitarizability of the subrepresentations with given $K$-types can for instance be deduced from the statements in \cite{HT93}. To show that the image of $f\mapsto f*\varphi_s$ is the claimed representation in all cases, we apply \eqref{eq:DefZeta} together with \eqref{eq:FourierOnKfinite} and \eqref{eq:PoissonOnKfinite}. This reduces the proof to the study of the zeros of $(\ell,m)\mapsto\beta_{\ell,m}(s)$ and $\beta_{\ell,m}(-s)$.
	\begin{enumerate}[(1)]
		\item For $s=0$ the numbers $\beta_{\ell,m}(s)$ vanish if and only if $\ell-m+d(p-q)+2\leq0$, so $\calE_0^+(\Xi)$ is the image of the Fourier transform $\calF_0$ and $\calE_0^-(\Xi)$ is the kernel of the Poisson transform $\calP_0$.
		\item For $s=\rho+2k+1>0$ we have $\beta_{\ell,m}(s)=0$ if and only if $m-\ell\geq p+2k+1$, so the image of the Fourier transform $\calF_s$ consisting of the $K$-modules $\calY_{\ell,m}$ with $m-\ell<p+2k+1$. On the other hand, $\beta_{\ell,m}(-s)\neq0$ for all $\ell,m$ since $s>0$, so $\calP_s$ is an isomorphism.
		\item For $s=\rho+2k$ we have $\beta_{\ell,m}(s)=0$ if and only if either $\ell+m>2k\geq0$ or $m-\ell\geq2k+p$. This shows that the image of the Fourier transform $\calF_s$ consisting of the $K$-modules $\calY_{\ell,m}$ with $\ell+m\leq2k$ in the case $k\geq0$ and $m-\ell<2k+p$ in the case $k<0$. Moreover, $\beta_{\ell,m}(-s)\neq0$ for all $\ell,m$, so $\calP_s$ is an isomorphism.\qedhere
	\end{enumerate}
\end{proof}

\section{Meromorphic continuation}

We now prove the meromorphic continuation of the resolvent operator
$$ \widetilde{R}(\zeta)=(\Box-\rho^2-\zeta^2)^{-1}:C_c^\infty(X)\to\calD'(X). $$
Applying the resolvent $\widetilde{R}(\zeta)$ to the inversion formula in Theorem~\ref{thm:InversionFormula} yields
\begin{equation}
	\widetilde{R}(\zeta)f(x) = I(\zeta)f(x) - D(\zeta)f(x),\label{eq:ResolventFormula1}
\end{equation}
where
\begin{equation}
	I(\zeta)f(x) = \frac{1}{4\pi}\int_\R (\nu^2-\zeta^2)^{-1}(f*\varphi_{i\nu})(x)\frac{d\nu}{|c(i\nu)|^2}\label{eq:ResolventFormula1cont}
\end{equation}
is the contribution of the continuous spectrum and
\begin{multline}
	D(\zeta) = \sum_{s\in D_1}\frac{(f*\varphi_s)(x)}{\zeta^2+s^2}\Res_{\sigma=s}\frac{1}{c(\sigma)c(-\sigma)}\\
	+\sum_{\rho+2k\in D_2}\frac{(f*\theta_k)(x)}{\zeta^2+(\rho+2k)^2}c_{-2}\left[\frac{1}{c(\sigma)c(-\sigma)};\rho+2k\right]\label{eq:ResolventFormula1disc}
\end{multline}
the contribution of the discrete part. While $D(\zeta)$ clearly is meromorphic in $\zeta\in\C$, the integral expression \eqref{eq:ResolventFormula1cont} only shows that $I(\zeta)$ is holomorphic in the upper half plane $\{\IM\zeta>0\}$ and we have to shift the contour to extend it to all $\zeta\in\C$. Contour shifts are possible by Cauchy's Integral Theorem since $(f*\varphi_s)(x)$ is holomorphic in $s\in\C$ by Lemma~\ref{lem:SphericalDistributions}~\eqref{lem:SphericalDistributions1} and $(c(s)c(-s))^{-1}$ is meromorphic as a quotient of gamma functions (see \eqref{eq:cfunction}). Further, we can show that the boundary terms at infinity vanish using the following statements on the growth/decay of $(f*\varphi_s)(x)$ and $(c(s)c(-s))^{-1}$ as $|\IM s|\to\infty$:

\begin{lem}\label{lem:GrowthSphericalDistributions}
	Let $f\in C_c^\infty(X)$. For every compact subset $\Omega\subseteq X$, $R>0$ and $M\in\N$:
	\begin{equation}\label{eq:estimates}
	  \sup_{x\in \Omega,|\RE s|\leq R} (1+|s|)^Me^{-\frac{\pi}{2}|\IM s|}|(f*\varphi_s)(x)| < \infty. 
	\end{equation}
\end{lem}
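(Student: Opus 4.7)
My approach is to combine the factorisation $(f*\varphi_s)(x)=\calP_s(\calF_s f)(x)$ from \eqref{eq:DefZeta} with Stirling's estimate on the gamma prefactors of $\calF_s$ and $\calP_s$, then bootstrap via the eigenequation for $\varphi_s$ to upgrade the polynomial factor to arbitrary decay.

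First I would establish a \emph{rough bound} $|(f*\varphi_s)(x)|\le C_f(1+|s|)^{c}e^{\frac{\pi}{2}|\IM s|}$ in strips $|\RE s|\le R$, with some fixed exponent $c$. In the half-plane $\RE s>\rho-d$, the Fourier integral
\[
\calF_s f(b)=\Gamma\bigl(\tfrac{s-\rho+d}{2}\bigr)^{-1}\int_X P(x,b)^{s-\rho}f(x)\,dx
\]
converges absolutely: the kernel $P(x,b)^{\RE s-\rho}$ is locally integrable in $x$ uniformly for $b$ in the compact set $B$ and $x$ in the compact $\supp f$, so the integral is bounded uniformly in $b$ and in $\IM s$. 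Stirling's asymptotic $|\Gamma(\sigma+it)|^{-1}=O(|t|^{-\sigma+1/2}e^{\frac{\pi}{2}|t|})$ applied to the prefactor --- whose imaginary part is $\IM s/2$ --- bounds it by $C(1+|s|)^{c_1}e^{\frac{\pi}{4}|\IM s|}$. An analogous estimate for $\calP_s$ in the region $\RE s<d-\rho$ gives $|\calP_s g(x)|\le C_\Omega(1+|s|)^{c_2}e^{\frac{\pi}{4}|\IM s|}\|g\|_\infty$ for $x\in\Omega$. Composing in the overlap region produces the rough bound with the desired factor $e^{\frac{\pi}{2}|\IM s|}$.

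To extend the rough bound to arbitrary strips $|\RE s|\le R$ --- necessary because the overlap region is empty when $\rho\ge d$ --- I would use the symmetry $\varphi_s=\varphi_{-s}$ from Lemma~\ref{lem:SphericalDistributions}\eqref{lem:SphericalDistributions2} to transfer the bound to the left half-plane, and then apply the Phragm\'en--Lindel\"of principle to the entire function $(f*\varphi_s)$ (entire in $s$ by Lemma~\ref{lem:SphericalDistributions}\eqref{lem:SphericalDistributions1}) in the bridging bounded strip. An alternative more explicit route is the $K$-type expansion \eqref{eq:FourierOnKfinite}--\eqref{eq:PoissonOnKfinite}: for $f$ of $K$-type $(\ell,m)$, $(f_{\ell,m}*\varphi_s)(x)$ equals an explicit product
\[
\mathrm{const}\cdot\beta_{\ell,m}(s)\beta_{\ell,m}(-s)\,Y(kb_0)\,\Psi_{\ell,m}(r,s)\int_0^\infty\Psi_{\ell,m}(t,s)F(t)A(t)\,dt,
\]
where $\beta_{\ell,m}(s)\beta_{\ell,m}(-s)$ carries exactly the factor $e^{\frac{\pi}{2}|\IM s|}$ via Stirling on its two gamma denominators, and $\Psi_{\ell,m}(\cdot,s)$ is polynomially bounded in $s$ at fixed arguments; the $K$-type sum converges by the smoothness of $f$.

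Finally I would bootstrap to polynomial decay of arbitrary order using the eigenequation $\Box\varphi_s=-(s^2-\rho^2)\varphi_s$. Since $\Box$ commutes with left convolution,
\[
(\Box^N f)*\varphi_s=(-1)^N(s^2-\rho^2)^N(f*\varphi_s).
\]
Applying the rough bound to $\Box^N f\in C_c^\infty(X)$ and dividing by $|s^2-\rho^2|^N$ gives $|(f*\varphi_s)(x)|\le C_N(1+|s|)^{c-2N}e^{\frac{\pi}{2}|\IM s|}$; choosing $2N-c\ge M$ yields \eqref{eq:estimates}. Apparent singularities at $s=\pm\rho$ are removable because $(f*\varphi_s)$ is entire in $s$. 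The main obstacle is precisely the extension of the rough bound across the intermediate strip where neither integral representation converges absolutely; Phragm\'en--Lindel\"of resolves this cleanly once the boundary estimates and the a priori entire holomorphy of finite exponential type are in place, but verifying the exponential-type hypothesis uniformly on vertical lines is the most delicate technical step.
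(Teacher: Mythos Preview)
Your alternative route via the $K$-type expansion together with the bootstrap through $\Box\varphi_s=-(s^2-\rho^2)\varphi_s$ is exactly the paper's argument. The paper makes the following points precise: since $\varphi_s$ is $H$-invariant one may average over $H\cap K$ and reduce to $K$-types $\calY_{\ell,0}$ only; the uniform bound $|\Psi_{\ell,0}(t,s)|\le C(1+t)e^{(|\RE s|-\rho)t}$ comes from Koornwinder's hypergeometric estimate; the factor $\beta_{\ell,0}(s)\beta_{\ell,0}(-s)$ contributes polynomial growth in $\ell$ as well as the $e^{\frac{\pi}{2}|\IM s|}$, and this $\ell$-growth is traded for derivatives of $f$ via the Casimir $\Delta_B$ of $K$, so that the $K$-type sum converges against $\|(1-\Delta_B)^Nf\|_\infty$. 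Your sketch omits these steps but the strategy is the same.

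Your first route, however, has a genuine gap. The regions of absolute convergence for the two integrals are $\RE s>\rho-d$ (for $\calF_s$) and $\RE s<d-\rho$ (for $\calP_s$), and their intersection is empty as soon as $\rho\ge d$, i.e.\ in all cases except $\F=\R$, $p+q=3$. The symmetry $\varphi_s=\varphi_{-s}$ does not help here: it maps the overlap strip to itself. To run Phragm\'en--Lindel\"of across the remaining strip you would need an a priori bound of the form $|(f*\varphi_s)(x)|\le Ce^{A|\IM s|}$ on some vertical line inside that strip, but obtaining any such bound is precisely the content of the lemma; the entire holomorphy of $\varphi_s$ alone gives no growth control. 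This is why the paper bypasses the issue entirely by working with the $K$-type formula, which is valid for all $s\in\C$ from the outset because $\beta_{\ell,m}$ and $\Psi_{\ell,m}$ are entire.
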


We give a proof of this statement in Appendix~\ref{app:PaleyWiener}.

\begin{lem}\label{lem:DecayCFunction}
	For every $R>0$ there exist $C>0$ such that
	$$ \frac{1}{|c(s)c(-s)|} \leq C(1+|\IM s|)^\rho e^{-\frac{\pi}{2}|\IM s|} \qquad \mbox{whenever }|\RE s|\leq R. $$
\end{lem}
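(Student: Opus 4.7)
The plan is to reduce the estimate to a uniform application of Stirling's asymptotic formula on vertical strips. First I would rewrite $c(s)c(-s)$ as a ratio of gamma functions. Multiplying \eqref{eq:cfunction} by its analogue at $-s$, the factors $2^{\rho-s}\cdot 2^{\rho+s}=4^{\rho}$ combine into a positive constant, giving
\begin{equation*}
c(s)c(-s) \;=\; C_{0}\,\frac{\Gamma(s)\Gamma(-s)}{\displaystyle\prod_{j=1}^{3}\Gamma\!\left(\tfrac{s+a_{j}}{2}\right)\Gamma\!\left(\tfrac{-s+a_{j}}{2}\right)},
\end{equation*}
where $(a_{1},a_{2},a_{3})=(\rho,\,dp-\rho,\,dq-\rho)$ and $C_{0}>0$ depends only on $d,p,q$. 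The key numerical identity is $a_{1}+a_{2}+a_{3}=dp+dq-\rho=\rho+2$, using $dp+dq=2\rho+2$.

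Next I would apply the uniform Stirling asymptotic
\begin{equation*}
\log|\Gamma(\sigma+i\tau)| \;=\; \bigl(\sigma-\tfrac{1}{2}\bigr)\log|\tau|\,-\,\tfrac{\pi}{2}|\tau|\,+\,O(1) \qquad (|\tau|\to\infty),
\end{equation*}
valid uniformly for $\sigma$ in compact subsets of $\R$. Writing $s=\sigma+i\tau$ with $|\sigma|\leq R$, each pair $\Gamma((s+a_{j})/2)\Gamma((-s+a_{j})/2)$ has arguments with imaginary parts $\pm\tau/2$, so their log-magnitudes sum to $(a_{j}-1)\log|\tau|-\tfrac{\pi}{2}|\tau|+O(1)$. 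Adding the three pairs (total exponent $a_{1}+a_{2}+a_{3}-3=\rho-1$) and subtracting $\log|\Gamma(s)\Gamma(-s)|=-\log|\tau|-\pi|\tau|+O(1)$ yields
\begin{equation*}
\log|c(s)c(-s)| \;=\; -\rho\log|\tau|\,+\,\tfrac{\pi}{2}|\tau|\,+\,O(1)
\end{equation*}
uniformly for $|\sigma|\leq R$, which is precisely the claimed bound in the regime $|\tau|\to\infty$.

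It remains to bound $1/|c(s)c(-s)|$ on each compact rectangle $|\sigma|\leq R$, $|\tau|\leq T$. Using the reflection identity $\tfrac{1}{\Gamma(s)\Gamma(-s)}=-\tfrac{s\sin(\pi s)}{\pi}$, which is entire, any singularity of $1/(c(s)c(-s))$ in the strip must come from a pole of one of the numerator gamma factors at a real point of the form $\mp a_{j}-2k$. A short case analysis — trivial when all $a_{j}$ are integers (i.e., $d\geq 2$ or $d=1$ with $p+q$ even), where the poles are absorbed by the zeros of $s\sin(\pi s)$, and invoking Legendre's duplication formula in the remaining half-integer case $d=1$ with $p+q$ odd — shows that these potential singularities cancel, so $1/(c(s)c(-s))$ is holomorphic and hence bounded on the rectangle. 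This pole-cancellation bookkeeping is the main subtle point; the Stirling calculation for large $|\tau|$ is routine.
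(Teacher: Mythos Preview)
Your Stirling computation for $|\IM s|\to\infty$ is correct and is essentially the paper's one-line proof: the paper simply cites Lemma~\ref{lem:Gamma1} (the vertical-strip Stirling asymptotic) and leaves the arithmetic you wrote out to the reader. So on the asymptotic regime you and the paper do exactly the same thing.

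The gap is in your treatment of the compact rectangle. You claim that a short case analysis shows the poles of $\prod_j\Gamma\bigl(\tfrac{\pm s+a_j}{2}\bigr)$ at real points are always absorbed by the zeros of $s\sin(\pi s)$, so that $1/(c(s)c(-s))$ is holomorphic on the strip. This is false: $1/(c(s)c(-s))$ has genuine poles on the real axis, as the paper later records in Lemma~\ref{lem:SingularitiesCFunction} (e.g.\ for $d=1$ with $pq$ even, or $d=2,4,8$, there are \emph{double} poles at every $s\in\rho+2\N$). A concrete counterexample to your cancellation argument: take $\F=\R$, $p=q=2$, so $a_1=a_2=a_3=1$; then the numerator $\bigl[\Gamma(\tfrac{s+1}{2})\Gamma(\tfrac{-s+1}{2})\bigr]^3$ has a \emph{triple} pole at every odd integer, while $s\sin(\pi s)$ only supplies a simple zero there.

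Consequently, the inequality in the lemma cannot literally hold for all $s$ with $|\RE s|\leq R$ once $R$ exceeds the first real pole; the statement is intended (and only ever used in the paper) as an asymptotic for $|\IM s|\to\infty$, and the paper's own proof via Lemma~\ref{lem:Gamma1} establishes exactly that much. Your Stirling part already proves what is needed; the compact-rectangle paragraph should simply be dropped.
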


\begin{proof}
	This follows from Lemma~\ref{lem:Gamma1}.
\end{proof}

By the previous considerations, the Cauchy Integral Formula can be applied to \eqref{eq:ResolventFormula1cont} in order to shift the contour from $\R$ to $\R-iN$, $N>0$. Let $E_N$ denote the set of singularities of $(c(i\nu)c(-i\nu))^{-1}\varphi_{i\nu}$ in the strip $\{-N<\IM\zeta<0\}$. We fix $N>0$ and assume that $\IM\zeta\in(0,N)$ and that $\zeta$ avoids the discrete set of poles of $(c(i\nu)c(-i\nu))^{-1}$. Changing the contour from $\R$ to $\R-iN$, we pick up residues of the function $(c(i\nu)c(-i\nu))^{-1}(f*\varphi_{i\nu})(x)$ at $\nu\in E_N$ as well as a residue at the pole of the function $(\nu^2-\zeta^2)^{-1}=(\nu+\zeta)^{-1}(\nu-\zeta)^{-1}$ which is $\nu=-\zeta$:
\begin{multline}\label{eq:integralres3}
	I(\zeta)f(x) = \frac{1}{4\pi}\int_{\R-iN}\frac{1}{\nu^2-\zeta^2}(f*\varphi_{i\nu})(x)\frac{d\nu}{c(i\nu)c(-i\nu)}\\
	+\frac{i}{4\zeta}\frac{(f*\varphi_{-i\zeta})(x)}{c(i\zeta)c(-i\zeta)}+\frac{1}{2}\sum_{s\in E_N}\Res_{\sigma=s}\frac{1}{\zeta^2+\sigma^2}\frac{(f*\varphi_\sigma)(x)}{c(\sigma)c(-\sigma)}.
\end{multline}

\begin{rmrk}\label{rem:ResidueForSingleDoublePole}
	We will later see that the function $(c(s)c(-s))^{-1}\varphi_s$ can have poles of order $>1$. Here we use the convention $\Res_{\sigma=s}f(\sigma)=c_{-1}(f;s)$ for a meromorphic function $f$ around $\sigma$ with Laurent series expansion $f(\sigma)=\sum_{n=-N}^\infty c_n(f;s)(\sigma-s)^n$. Note that if $(c(\sigma)c(-\sigma))^{-1}\varphi_\sigma$ has a simple pole at $\sigma=s$ then
	$$ \Res_{\sigma=s}\frac{1}{\zeta^2+\sigma^2}\frac{(f*\varphi_\sigma)(x)}{c(\sigma)c(-\sigma)} = \frac{1}{\zeta^2+s^2}\Res_{\sigma=s}\frac{(f*\varphi_\sigma)(x)}{c(\sigma)c(-\sigma)}, $$
	while if $(c(\sigma)c(-\sigma))^{-1}\varphi_\sigma$ has a double pole at $\sigma=s$ then
	\begin{multline*}
		\Res_{\sigma=s}\frac{1}{\zeta^2+\sigma^2}\frac{(f*\varphi_\sigma)(x)}{c(\sigma)c(-\sigma)} = \frac{1}{\zeta^2+s^2}c_{-1}\left[\frac{(f*\varphi_\sigma)(x)}{c(\sigma)c(-\sigma)};\sigma=s\right]\\- \frac{2s}{(\zeta^2+s^2)^2}c_{-2}\left[\frac{(f*\varphi_\sigma)(x)}{c(\sigma)c(-\sigma)};\sigma=s\right].
	\end{multline*}
\end{rmrk}

The expression \eqref{eq:integralres3} is clearly meromorphic in $\{-N<\IM\zeta<N\}$, and since $N>0$ was arbitrary, this shows that $I(\zeta)$ has a meromorphic extension to $\zeta\in\C$. Together with \eqref{eq:ResolventFormula1}, this implies:

\begin{thm}\label{thm:MeroCont}
	The resolvent $\widetilde{R}(\zeta):C_c^\infty(X)\to\calD'(X)$ has a meromorphic extension to $\zeta\in\C$.
\end{thm}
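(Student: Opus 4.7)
The plan is to combine the inversion formula of Theorem~\ref{thm:InversionFormula} with the spectral action of $\widetilde{R}(\zeta)$ on the spherical eigendistributions. Since $\Box\varphi_s=-(s^2-\rho^2)\varphi_s$, formally $\widetilde R(\zeta)(f*\varphi_{i\nu})=(\nu^2-\zeta^2)^{-1}(f*\varphi_{i\nu})$, and analogously for the discrete terms $f*\varphi_s$ and $f*\theta_k$. Applying this term by term to Theorem~\ref{thm:InversionFormula} yields the decomposition \eqref{eq:ResolventFormula1} with $I(\zeta)f$ the contour integral \eqref{eq:ResolventFormula1cont} and $D(\zeta)f$ the finite sum \eqref{eq:ResolventFormula1disc}. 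The discrete piece $D(\zeta)f$ is a finite sum of rational functions of $\zeta^2$ and is therefore manifestly meromorphic on $\C$; the entire task is thus to meromorphically extend $I(\zeta)$.

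For $\IM\zeta>0$ with $\zeta^2+\rho^2\notin\sigma_p(\Box)$ the integral \eqref{eq:ResolventFormula1cont} converges absolutely and is holomorphic in $\zeta$. To continue it past the real line I would fix an arbitrary $N>0$ with $-iN\notin E_N$ and shift the contour from $\R$ to $\R-iN$ via Cauchy's theorem applied to a large rectangle $[-T,T]\cup\{\RE\nu=\pm T,\,-N\leq\IM\nu\leq0\}\cup[-T-iN,T-iN]$. Holomorphy of the integrand inside the strip, apart from finitely many poles, is guaranteed by Lemma~\ref{lem:SphericalDistributions}\,\eqref{lem:SphericalDistributions1} (entirety of $s\mapsto\varphi_s$, hence of $s\mapsto f*\varphi_s$) together with the meromorphy of $(c(s)c(-s))^{-1}$ read off from \eqref{eq:cfunction} as a quotient of gamma functions; and the rational factor $(\nu^2-\zeta^2)^{-1}$ contributes the single pole at $\nu=-\zeta$ (the pole $\nu=\zeta$ lies in the upper half plane and is not crossed).

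The key point is to make the vertical sides of the rectangle disappear as $T\to\infty$. Here the two Paley--Wiener type estimates come into play: Lemma~\ref{lem:GrowthSphericalDistributions} provides, for any $M$, a bound $|(f*\varphi_s)(x)|\le C_M(1+|s|)^{-M}e^{\frac{\pi}{2}|\IM s|}$ uniformly on $|\RE s|\leq R$ and $x$ in a compact set, while Lemma~\ref{lem:DecayCFunction} gives $(c(s)c(-s))^{-1}=O((1+|\IM s|)^\rho e^{-\frac{\pi}{2}|\IM s|})$ on the same strip. The exponential factors cancel exactly, and the remaining polynomial decay in $|\RE\nu|$ (with $M$ chosen large enough) forces the integrals over the vertical sides to vanish. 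Collecting the residues inside the strip yields the explicit formula \eqref{eq:integralres3}: a shifted integral along $\R-iN$, plus the residue at $\nu=-\zeta$, plus a finite sum of residues at $\nu\in E_N$. All three pieces are meromorphic in $\zeta$ on the open strip $\{-N<\IM\zeta<N\}$, and since $N>0$ is arbitrary this meromorphically extends $I(\zeta)$, and hence $\widetilde R(\zeta)$, to all of $\C$.

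The genuine obstacle is the exponential cancellation underlying the contour shift: one has to know in advance that the spherical distributions $\varphi_s$ grow no faster than $e^{\frac{\pi}{2}|\IM s|}$ when paired with test functions, and that this growth is precisely absorbed by the Plancherel density. Both statements are technical but self-contained estimates (Lemmas~\ref{lem:GrowthSphericalDistributions} and~\ref{lem:DecayCFunction}), and once they are granted the meromorphic continuation follows at once from Cauchy's integral theorem, so the theorem is essentially a direct consequence of formula~\eqref{eq:integralres3}.
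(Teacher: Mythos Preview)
Your approach is exactly the one in the paper: decompose $\widetilde R(\zeta)f$ via the inversion formula into $I(\zeta)f-D(\zeta)f$, observe that $D(\zeta)$ is meromorphic, and shift the contour in $I(\zeta)$ from $\R$ to $\R-iN$ using Lemmas~\ref{lem:GrowthSphericalDistributions} and~\ref{lem:DecayCFunction} to kill the vertical segments, arriving at \eqref{eq:integralres3}. One small correction: $D(\zeta)f$ is \emph{not} a finite sum---in every case either $D_1$ or $D_2$ is infinite---so its meromorphy needs a word of justification (the poles $\zeta=\pm is$, $s\in D$, form a discrete set and the series converges locally uniformly away from them, e.g.\ by the spectral theorem for the discrete part of $L^2(X)$); the paper itself glosses over this point as well.
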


\section{Resonances and residue representations}\label{sec:ResonancesAndRepresentations}

We now study the location of the poles of the meromorphic extension of $\widetilde{R}(\zeta)$ and the corresponding residue representations. In view of \eqref{eq:ResolventFormula1}, \eqref{eq:ResolventFormula1disc} and \eqref{eq:integralres3} the possible poles of $\widetilde{R}(\zeta)$ can be divided into three families which we treat separately.

\subsection{Poles in the upper half plane}\label{sec:PolesUpperHalfPlane}

Since the expression \eqref{eq:ResolventFormula1cont} for $I(\zeta)$ is holomorphic in the upper half plane $\{\IM\zeta>0\}$, the poles of $\widetilde{R}(\zeta)=I(\zeta)-D(\zeta)$ can be read off from the expression \eqref{eq:ResolventFormula1disc} for $D(\zeta)$. They are the \emph{trivial resonances} coming from the discrete spectrum of the Laplacian.

\begin{thm}\label{thm:ResonancesUpperHalfPlane}
	The poles of $\widetilde{R}(\zeta)$ in the upper half plane $\{\IM\zeta>0\}$ are the values $\zeta=is$ with $s\in D$. The corresponding residue representation at $\zeta=is$ is the relative discrete series representation of $G$ on the eigenspace
	$$ \{u\in L^2(X):\Box u=-(s^2-\rho^2)u\}. $$
\end{thm}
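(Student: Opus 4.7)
The plan is to leverage the decomposition $\widetilde{R}(\zeta) = I(\zeta) - D(\zeta)$ from \eqref{eq:ResolventFormula1} established in the previous section. First I would observe that the integral expression \eqref{eq:ResolventFormula1cont} for $I(\zeta)$ is manifestly holomorphic on $\{\IM\zeta>0\}$: the denominator $\nu^2 - \zeta^2$ never vanishes for $\nu \in \R$ with $\IM\zeta>0$, since its imaginary part $-2\RE(\zeta)\IM(\zeta)$ is zero only on the imaginary axis, where $\nu^2 - \zeta^2 = \nu^2 + |\zeta|^2 > 0$. Consequently, every pole of $\widetilde{R}(\zeta)$ in the upper half plane must come from $D(\zeta)$.

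Next I would read the poles off directly from \eqref{eq:ResolventFormula1disc}: each summand carries a factor $(\zeta^2+s^2)^{-1}$, contributing a simple pole in the upper half plane at $\zeta = is$ (since $s > 0$ in both $D_1$ and $D_2$). As $s$ ranges over $D = D_1 \cup D_2$, this gives precisely the candidate set of poles claimed in the theorem. Using $\Res_{\zeta=is}(\zeta^2+s^2)^{-1} = (2is)^{-1}$, the residue operator at $\zeta = is$ equals a nonzero scalar multiple of $f \mapsto (f*\varphi_s)(x)$ for $s \in D_1$ and of $f \mapsto (f*\theta_k)(x)$ for $s = \rho+2k \in D_2$.

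To identify the residue representation I would invoke Proposition~\ref{prop:PoissonImages2}. Part~\eqref{prop:PoissonImages21} identifies the image of $f \mapsto f*\varphi_s$ for $s \in D_1$ with the relative discrete series $\{u \in L^2(X) : \Box u = -(s^2-\rho^2)u\}$, while part~\eqref{prop:PoissonImages22b} identifies the image of $f \mapsto f*\theta_k$ for $s = \rho+2k \in D_2$ with the relative discrete series at eigenvalue $-4k(\rho+k) = -(s^2-\rho^2)$. Scaling by a nonzero constant does not affect the image, so in both cases the residue operator has range exactly the eigenspace claimed in the theorem.

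The one point requiring care—and what I expect to be the main obstacle—is the nonvanishing of the scalar $\Res_{\sigma=s}(c(\sigma)c(-\sigma))^{-1}$ (for $s \in D_1$), respectively $c_{-2}[(c(\sigma)c(-\sigma))^{-1};\rho+2k]$ (for $s = \rho+2k \in D_2$), which is what promotes $\zeta=is$ from a candidate singularity to an actual pole. This can in principle be checked directly from the explicit gamma function formula \eqref{eq:cfunction}, but the cleanest argument is indirect: if such a scalar were zero, then the inversion formula in Theorem~\ref{thm:InversionFormula} would produce a trivial projection onto the eigenspace at $\rho^2 - s^2 \in \sigma_p(\Box)$, contradicting the nontriviality of the discrete spectrum.
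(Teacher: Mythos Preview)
Your proposal is correct and follows essentially the same approach as the paper: use the original integral expression \eqref{eq:ResolventFormula1cont} to see that $I(\zeta)$ is holomorphic in the upper half plane, then read off the poles and residues of $\widetilde{R}(\zeta)$ directly from the discrete part \eqref{eq:ResolventFormula1disc}, identifying the residue representations via Proposition~\ref{prop:PoissonImages2}. Your treatment is in fact more explicit than the paper's---you spell out why $\nu^2-\zeta^2\neq0$ and why the Plancherel coefficients are nonzero---whereas the paper leaves these points implicit.
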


\begin{rmrk}
	At first glance, it seems that \eqref{eq:integralres3} suggests that $I(\zeta)$ also has poles in the upper half plane. However, the poles of the second and the third term in \eqref{eq:integralres3} cancel at all values $\zeta$, $\IM\zeta>0$, where $(c(i\zeta)c(-i\zeta))^{-1}(f*\varphi_{-i\zeta})(x)$ has a pole.
\end{rmrk}

\subsection{Poles on the real line}

The only term in \eqref{eq:integralres3} that could produce a pole at $\zeta\in\R$ is the term
$$ \frac{i}{4\zeta}\frac{(f*\varphi_{-i\zeta})(x)}{c(i\zeta)c(-i\zeta)} $$
which might have a pole at $\zeta=0$. Since $\varphi_s$ is non-zero at $s=0$ by Lemma~\ref{lem:SphericalDistributions}, this term in fact has a pole at $s=0$ if and only if $(c(s)c(-s))^{-1}$ vanishes at $s=0$. Using $\Gamma(z)\Gamma(1-z)=\frac{\pi}{\sin\pi z}$, we can rewrite
\begin{equation}
	\frac{1}{c(s)c(-s)} = \const\times\frac{\Gamma(\frac{s+\rho}{2})\Gamma(\frac{-s+\rho}{2})}{\Gamma(s)\Gamma(-s)\sin(\frac{s+dp-\rho}{2}\pi)\sin(\frac{s+dq-\rho}{2}\pi)}\label{eq:PlancherelDensity1}
\end{equation}

\begin{lem}\label{lem:ResolventRewritten}
$(c(s)c(-s))^{-1}$ is regular for all $s\in i\R$, and it vanishes at $s=0$ if and only if either
\begin{itemize}
	\item $\F=\R$ and $p-q\not\equiv2\mod4$, or
	\item $\F=\C$ and $p-q\not\equiv1\mod2$, or
	\item $\F=\H,\O$.
\end{itemize}
\end{lem}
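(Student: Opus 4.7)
The plan is to derive both statements directly from the factorization \eqref{eq:PlancherelDensity1}. The numerator $\Gamma(\tfrac{s+\rho}{2})\Gamma(\tfrac{\rho-s}{2})$ has poles only at $s\in\pm(\rho+2\N)\subset\R\setminus\{0\}$ (because $\rho>0$) and no zeros, so it is holomorphic and nonvanishing on the entire imaginary axis. In the denominator, the factor $\Gamma(s)\Gamma(-s)$ has no zeros and all its poles lie on $\Z$, while the two sines $\sin(\tfrac{s+dp-\rho}{2}\pi)$ and $\sin(\tfrac{s+dq-\rho}{2}\pi)$ can only vanish at real values of $s$. Consequently the denominator is finite and nonzero on $i\R\setminus\{0\}$, so $(c(s)c(-s))^{-1}$ is already regular there and only the behaviour at $s=0$ remains to be checked.

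At $s=0$ I would expand all factors. From $\Gamma(s)\Gamma(-s)=-1/s^2+O(1)$ the denominator acquires a double pole, and substituting $\rho=\tfrac{d(p+q)}{2}-1$ turns the two sine arguments at $s=0$ into $\tfrac{\pi}{2}\pm\tfrac{d(p-q)}{4}\pi$, which sum to $\pi$; so either both arguments are integer multiples of $\pi$ or neither is, and the product of sines at $s=0$ equals $\cos^2(\tfrac{d(p-q)}{4}\pi)$. If this value is nonzero, the denominator has a genuine double pole at $s=0$ while the numerator takes the positive value $\Gamma(\rho/2)^2$, so $(c(s)c(-s))^{-1}$ has a double zero at $s=0$; in particular it vanishes and is regular there. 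If instead $\cos^2(\tfrac{d(p-q)}{4}\pi)=0$, both sines vanish simply at $s=0$ with derivatives $\pm\pi/2$, and their product has a double zero that cancels the double pole of $\Gamma(s)\Gamma(-s)$ exactly, so $(c(s)c(-s))^{-1}$ is regular and nonvanishing at $s=0$. This establishes regularity on all of $i\R$ and shows that $(c(s)c(-s))^{-1}$ vanishes at $s=0$ precisely when $\cos^2(\tfrac{d(p-q)}{4}\pi)\neq0$.

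To finish I would translate the condition $\cos^2(\tfrac{d(p-q)}{4}\pi)\neq 0$, equivalently $d(p-q)\not\equiv 2\pmod 4$, into each of the four cases: for $\F=\R$ ($d=1$) it reads $p-q\not\equiv 2\pmod 4$; for $\F=\C$ ($d=2$) it reads $p-q\not\equiv 1\pmod 2$; and for $\F=\H,\O$ ($d=4,8$) it holds unconditionally, since $4\mid d(p-q)$. These are exactly the three cases listed in the lemma. I expect the main point where care is required to be the analysis at $s=0$, and in particular the observation that the two sines vanish simultaneously (forced by $\tfrac{dp-\rho}{2}+\tfrac{dq-\rho}{2}=1$); this prevents a mixed single-zero-meets-double-pole configuration from ever producing an actual pole of $(c(s)c(-s))^{-1}$ on $i\R$.
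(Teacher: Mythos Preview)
Your proof is correct and follows the same route as the paper: both use the factorization \eqref{eq:PlancherelDensity1}, analyze the double pole of $\Gamma(s)\Gamma(-s)$ at $s=0$, and exploit the identity $\tfrac{dp-\rho}{2}+\tfrac{dq-\rho}{2}=1$ (equivalently $dq-\rho=2-(dp-\rho)$) to see that the two sines vanish together or not at all. Your treatment is slightly more explicit---you compute the product as $\cos^2(\tfrac{d(p-q)}{4}\pi)$ and separately verify regularity on $i\R\setminus\{0\}$---but there is no substantive difference in approach.
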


\begin{proof}
We use the expression \eqref{eq:PlancherelDensity1}. First note that the gamma factors $\Gamma(\pm s)$ have a single pole at $s=0$ while the gamma factors $\Gamma(\frac{\pm s+\rho}{2})$ are regular at $s=0$. Further, for $s=0$ the two sine terms become $\sin(\frac{dp-\rho}{2}\pi)$ and $\sin(\frac{dq-\rho}{2}\pi)$, and since $dq-\rho=2-(dp-\rho)$ they are either both non-zero or both zero, the latter being the case iff $dp-\rho=\frac{d(p-q)}{2}+1\in2\Z$. This show the claim.
\end{proof}

\begin{thm}\label{thm:ResonancesRealLine}
	The only possible pole of the resolvent $\widetilde{R}(\zeta)$ for $\zeta\in\R$ is at $\zeta=0$, and this is indeed a pole if and only if either $\F=\C$ and $p-q\in2\Z$ or if $\F=\R$ and $p-q\in4\Z+2$. In this case the corresponding residue representation is equivalent to the representation $\calE_0^+(\Xi)$ for $q>1$, and $\calE_0(\Xi)$ for $q=1$ (see Proposition~\ref{prop:PoissonImages1}~\eqref{prop:PoissonImages1a}).
\end{thm}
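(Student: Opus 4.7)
The plan is to use the decomposition $\widetilde R(\zeta)=I(\zeta)-D(\zeta)$ supplied by \eqref{eq:ResolventFormula1} together with the contour-shift formula \eqref{eq:integralres3} for $I(\zeta)$, isolate the only term in these expressions capable of producing a singularity on $\R$, and then apply Lemma~\ref{lem:ResolventRewritten} and Proposition~\ref{prop:PoissonImages1} to decide when a pole is actually present and to identify its residue representation. First I would note that the discrete part $D(\zeta)$ from \eqref{eq:ResolventFormula1disc} has only poles on the imaginary axis at $\zeta=\pm is$, $s\in D\subset(0,\infty)$, hence off $\R$. In the formula \eqref{eq:integralres3} the contour integral over $\R-iN$ is holomorphic in the strip $\{|\IM\zeta|<N\}$, and the sum over $E_N$ can produce a pole only at $\zeta^2+\sigma^2=0$ for some $\sigma\in E_N$; since $\varphi_\sigma$ is entire by Lemma~\ref{lem:SphericalDistributions}\eqref{lem:SphericalDistributions1} and the singularities of $(c(\sigma)c(-\sigma))^{-1}$ come from the poles of the gamma factors $\Gamma(\tfrac{\pm\sigma+\rho}{2})$ in \eqref{eq:PlancherelDensity1}, these $\sigma$ are real, so $\pm i\sigma\in i\R\setminus\{0\}$. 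Thus the only candidate for a pole on $\R$ is the residue term $\frac{i}{4\zeta}\frac{(f*\varphi_{-i\zeta})(x)}{c(i\zeta)c(-i\zeta)}$ at $\zeta=0$.

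Next I would analyse this candidate. Since $\rho>0$, the value $s=0$ is never of the form $\pm(\rho+2k)$ with $k\in\N$, so Lemma~\ref{lem:SphericalDistributions}\eqref{lem:SphericalDistributions3} yields $\varphi_0\ne 0$ and therefore $(f*\varphi_0)(x)$ is non-zero for generic $f$ and $x$. The question therefore reduces to the order of vanishing of $(c(i\zeta)c(-i\zeta))^{-1}$ at $\zeta=0$. Inspecting \eqref{eq:PlancherelDensity1}, the factor $1/[\Gamma(s)\Gamma(-s)]$ contributes a zero of order exactly two, the remaining gamma factors $\Gamma(\tfrac{\pm s+\rho}{2})$ are regular and non-vanishing at $s=0$, and the two sine factors $\sin(\tfrac{s+dp-\rho}{2}\pi)$ and $\sin(\tfrac{s+dq-\rho}{2}\pi)$ either are both non-zero at $s=0$ or both vanish to order exactly one -- they move in lockstep by the symmetry $dq-\rho=2-(dp-\rho)$ used in the proof of Lemma~\ref{lem:ResolventRewritten}. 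In the first alternative $(c(s)c(-s))^{-1}$ has a double zero at $s=0$ and $\frac{1}{\zeta}(c(i\zeta)c(-i\zeta))^{-1}$ is regular; in the second alternative the two sine zeros cancel the $\Gamma$-zero, $(c(s)c(-s))^{-1}$ is finite and non-zero at $s=0$, and $\frac{1}{\zeta}(c(i\zeta)c(-i\zeta))^{-1}$ has a simple pole at $\zeta=0$. Lemma~\ref{lem:ResolventRewritten} translates this second alternative into the arithmetic conditions on $\F$, $p$ and $q$ stated in the theorem.

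Finally, in the pole case, the residue of $\widetilde R(\zeta)$ at $\zeta=0$ is a non-zero scalar multiple of the $G$-equivariant convolution operator $f\mapsto f*\varphi_0$, and its image in $\calD'(X)$ is, by Proposition~\ref{prop:PoissonImages1}\eqref{prop:PoissonImages1a}, equivalent to $\calE_0^+(\Xi)$ for $q>1$ and to $\calE_0(\Xi)$ for $q=1$. The principal subtlety is the local analysis at $s=0$: one must verify that the order of vanishing of $(c(s)c(-s))^{-1}$ there is always $0$ or $2$, never $1$ or $3$, so that the "pole / no pole" dichotomy is clean. This rests ultimately on the symmetry $dq-\rho=2-(dp-\rho)$, which forces the two sine factors to vanish at the same arithmetic condition and to the same simple order, perfectly matching the order-two zero furnished by $1/[\Gamma(s)\Gamma(-s)]$.
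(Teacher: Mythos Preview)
Your proof is correct and follows essentially the same route as the paper: isolate the term $\tfrac{i}{4\zeta}(c(i\zeta)c(-i\zeta))^{-1}(f*\varphi_{-i\zeta})(x)$ as the sole candidate for a real pole, invoke Lemma~\ref{lem:ResolventRewritten} to decide when it actually has one, and then use Proposition~\ref{prop:PoissonImages1}\eqref{prop:PoissonImages1a} to identify the residue representation. Your explicit order analysis at $s=0$ (showing the order of vanishing of $(c(s)c(-s))^{-1}$ is always $0$ or $2$) is a useful clarification; the only imprecision is that the poles of $(c(\sigma)c(-\sigma))^{-1}$ in \eqref{eq:PlancherelDensity1} come not only from $\Gamma(\tfrac{\pm\sigma+\rho}{2})$ but also from the zeros of the sine factors---either way all such $\sigma$ are real (cf.\ Lemma~\ref{lem:SingularitiesCFunction}), so your conclusion stands.
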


\begin{proof}
	The first part of the statement follows from Lemma~\ref{lem:ResolventRewritten}. For the second part, we note that the residue representation is the image of $f\mapsto f*\varphi_0$, so the statement follows from Proposition~\ref{prop:PoissonImages1}~\eqref{prop:PoissonImages1a}.
\end{proof}

\subsection{Poles in the lower half plane}

By \eqref{eq:ResolventFormula1disc} and \eqref{eq:integralres3}, the singularities of $\widetilde{R}(\zeta)=I(\zeta)-D(\zeta)$ in the lower half plane $\{\IM\zeta<0\}$ are given by the following formal expression:
\begin{multline}
	\frac{i}{4\zeta}\frac{(f*\varphi_{-i\zeta})(x)}{c(i\zeta)c(-i\zeta)}+\frac{1}{2}\sum_{s\in E}\Res_{\sigma=s}\frac{1}{\zeta^2+\sigma^2}\frac{(f*\varphi_\sigma)(x)}{c(\sigma)c(-\sigma)}\\
	-\sum_{s\in D_1}\frac{(f*\varphi_s)(x)}{\zeta^2+s^2}\Res_{\sigma=s}\frac{1}{c(\sigma)c(-\sigma)}\\
	-\sum_{\rho+2k\in D_2}\frac{(f*\theta_k)(x)}{\zeta^2+(\rho+2k)^2}c_{-2}\left[\frac{1}{c(\sigma)c(-\sigma)};\rho+2k\right],\label{eq:SingularitiesLowerHalfPlane}
\end{multline}
where $E=\bigcup_{N>0}E_N$ denotes the set of poles of the function $(c(i\nu)c(-i\nu))^{-1}\varphi_{i\nu}$ in the lower half plane. It follows that the singularities of $\widetilde{R}(\zeta)$ in the lower half plane are contained in the set $D\cup E$. To determine $E$ explicitly we need the following result:

\begin{lem}\label{lem:SingularitiesCFunction}
	The singularities of the function $\nu\mapsto(c(i\nu)c(-i\nu))^{-1}$ in the half plane $\{\IM\nu<0\}$ are:
	\begin{itemize}
		\item For $d=1$ with $pq$ even or $d=2,4,8$ there are single poles at $\nu=-iy$, $0<y<\rho$, $y\in\rho+2\Z$, and double poles at $\nu=-iy$, $y\in\rho+2\N$. For $d=1$ with $p-q\in2\Z+1$ there are additional single poles at $\nu=-iy$, $y>0$, $y\in\rho+2\Z+1$.
		\item For $d=1$ with $pq$ odd there are single poles at $\nu=-iy$, $y>0$, $y\in\rho+2\Z+1$.
	\end{itemize}
\end{lem}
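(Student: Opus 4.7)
The plan is to work directly with the factored expression \eqref{eq:PlancherelDensity1} for $(c(s)c(-s))^{-1}$ and to track, at each candidate location $s_0$, the net order of vanishing or singularity by tallying contributions from the gamma and sine factors. Since the expression is even in $s$ and all of its potential zeros and poles lie on the real $s$-axis, all singularities of $(c(i\nu)c(-i\nu))^{-1}$ in $\{\IM\nu<0\}$ lie on the negative imaginary axis, and it suffices to locate the singularities of $(c(s)c(-s))^{-1}$ on $\{s>0\}$; a pole at $s=y>0$ corresponds to one at $\nu=-iy$.

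The key simplification is a product-to-sum identity for the two sine factors. Setting $\mu:=\tfrac{d(p-q)}{2}$ and using $dp-\rho=\mu+1$, $dq-\rho=-\mu+1$, I obtain
$$
\sin\!\left(\tfrac{s+dp-\rho}{2}\pi\right)\sin\!\left(\tfrac{s+dq-\rho}{2}\pi\right) = \tfrac{1}{2}\big[\cos(\mu\pi)+\cos(s\pi)\big].
$$
From this I read off the zeros: if $\mu\in\Z$ (i.e.\ $d\ge 2$, or $d=1$ with $p-q$ even) they are \emph{double} and occur at the integers with a fixed parity determined by $\mu$; if $\mu\in\Z+\tfrac{1}{2}$ (i.e.\ $d=1$ with $p-q$ odd) the product reduces to $\tfrac{1}{2}\cos(s\pi)$ and has \emph{simple} zeros at every half-integer. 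Using $\rho-\mu=dq-1$, I further see that in the integer case the sine zeros coincide with the lattice $\rho+2\Z$ precisely when $dq$ is even.

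The remaining factors on $\{s>0\}$ contribute as follows: the numerator $\Gamma(\tfrac{s+\rho}{2})\Gamma(\tfrac{\rho-s}{2})$ has simple poles exactly at $s\in\rho+2\N$ and is otherwise regular and non-zero; $\Gamma(s)$ is regular and non-zero; and $\Gamma(-s)$ in the denominator contributes a simple zero of $(c(s)c(-s))^{-1}$ at each $s\in\N$. The case analysis then splits according to whether $\mu\in\Z$ or $\mu\in\Z+\tfrac{1}{2}$, and in the former case further according to the parity of $dq$. For $d\ge 2$ or $d=1$ with $p,q$ both even, sine zeros are double on $\rho+2\Z$: at each $s\in\rho+2\N$ the simple numerator pole, the simple $\Gamma(-s)$ pole and the double sine zero combine to a double pole of $(c(s)c(-s))^{-1}$, while at each $s\in(0,\rho)\cap(\rho+2\Z)$ only the $\Gamma(-s)$ pole and the sine zero contribute, giving a simple pole. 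For $d=1$ with $p,q$ both odd the sine zeros instead sit on $\rho+2\Z+1$: the numerator poles at $\rho+2\N$ cancel exactly against the $\Gamma(-s)$ poles, and only the simple sine zeros at $s\in(\rho+2\Z+1)\cap\R_{>0}$ survive (reduced by the coincident $\Gamma(-s)$ pole) to give simple poles. For $d=1$ with $p-q$ odd, the half-integer simple sine zeros are unaffected by $\Gamma(-s)$: at $s\in\rho+2\N$ the numerator pole and sine zero combine to a double pole, and at $s\in(0,\rho)\cap(\rho+2\Z)$ as well as at $s\in(\rho+2\Z+1)\cap\R_{>0}$ the sine zero alone yields a simple pole.

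The main obstacle will be the careful parity bookkeeping: determining from the parities of $d$, $p$, $q$ the integrality and parities of $\rho$ and $\mu$, and in the integer case deciding whether the double sine zeros sit on the lattice $\rho+2\Z$ of numerator poles and whether they coincide with the poles of $\Gamma(-s)$ on $\N$. Once the parities have been properly disentangled, the order arithmetic is routine.
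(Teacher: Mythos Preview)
Your proof is correct and follows essentially the same approach as the paper: analyze the factored expression \eqref{eq:PlancherelDensity1} by tallying the orders of poles and zeros contributed by each gamma and sine factor at the candidate points $s>0$. The one notable difference is your product-to-sum identity
\[
\sin\!\Big(\tfrac{s+dp-\rho}{2}\pi\Big)\sin\!\Big(\tfrac{s+dq-\rho}{2}\pi\Big)=\tfrac{1}{2}\big[\cos(\mu\pi)+\cos(s\pi)\big],\qquad \mu=\tfrac{d(p-q)}{2},
\]
which packages the two sine factors into a single expression and makes the distinction between the double-zero case ($\mu\in\Z$) and the simple-zero case ($\mu\in\Z+\tfrac{1}{2}$) immediate. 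The paper instead tracks the two sine factors separately, obtaining the two cosets $(p-q)\tfrac{d}{2}+1+2\Z$ and $(q-p)\tfrac{d}{2}+1+2\Z$ of potential pole locations, and then appeals to ``a careful analysis of the cases $d=1,2,4,8$''. Your formulation is a bit more transparent for the parity bookkeeping, but the underlying arithmetic and the conclusions are identical.
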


\begin{proof}
	Write $\nu=-iy$ with $\RE y>0$. Then \eqref{eq:PlancherelDensity1} becomes
	\begin{equation}
		\frac{1}{c(i\nu)c(-i\nu)} = \const\times\frac{\Gamma(\frac{y+\rho}{2})\Gamma(\frac{-y+\rho}{2})}{\Gamma(y)\Gamma(-y)\sin(\frac{y+dp-\rho}{2}\pi)\sin(\frac{y+dq-\rho}{2}\pi)}.\label{eq:CFunctionLowerHalfPlane}
	\end{equation}
	Note first that $\Gamma(\frac{y+\rho}{2})$ is non-singular for all $y>0$ since $\rho>0$. Further, $\Gamma(\frac{-y+\rho}{2})$ has single poles at $y\in\rho+2\N$. Moreover, $\sin(\frac{y+dp-\rho}{2}\pi)^{-1}$ has a pole if and only if $y+(p-q)\frac{d}{2}+1\in2\Z$, and $\sin(\frac{y+dq-\rho}{2}\pi)^{-1}$ has a pole if and only if $y+(q-p)\frac{d}{2}+1\in2\Z$. This gives the three sets $\rho+2\N$, $(p-q)\frac{d}{2}+1+2\Z$ and $(q-p)\frac{d}{2}+1+2\Z$, each contributing a single pole. On the other hand, the only factor of \eqref{eq:CFunctionLowerHalfPlane} that can produce a zero is $\Gamma(-y)^{-1}$, and it is zero if and only if $y\in\N$. A careful analysis of the cases $d=1,2,4,8$ shows the claim.
\end{proof}

Combining Lemma~\ref{lem:SingularitiesCFunction} with Lemma~\ref{lem:SphericalDistributions} immediately yields:

\begin{coro}\label{cor:SingularitiesZetaOverC}
The singularities of the function $\nu\mapsto(c(i\nu)c(-i\nu))^{-1}\varphi_{i\nu}$ in the half plane $\{\IM\nu<0\}$ are:
\begin{itemize}
	\item For $d=1$ with $q$ even or $d=2,4,8$ there are single poles at $\nu=-iy$, $y>0$, $y\in\rho+2\Z$. For $d=1$ with $p$ odd and $q$ even there are additional single poles at $\nu=-iy$, $y>0$, $y\in\rho+2\Z+1$.
	\item For $d=1$ with $p$ and $q$ odd there are single poles at $\nu=-iy$, $y>0$, $y\in\rho+2\Z+1$.
	\item For $d=1$ with $p$ even and $q$ odd there are single poles at $\nu=-iy$, $0<y<\rho$, $y\in\rho+2\Z$ and at $\nu=-iy$, $y>0$, $y\in\rho+2\Z+1$, and double poles at $\nu=-iy$, $y\in\rho+2\N$.
\end{itemize}
\end{coro}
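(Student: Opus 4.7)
The plan is to combine the two preceding lemmas directly: since Lemma~\ref{lem:SphericalDistributions}\eqref{lem:SphericalDistributions1} tells us that $\varphi_s$ is entire, the singularities of the product $(c(i\nu)c(-i\nu))^{-1}\varphi_{i\nu}$ are a subset of the singularities of $(c(i\nu)c(-i\nu))^{-1}$ listed in Lemma~\ref{lem:SingularitiesCFunction}. The only question is by how much a zero of $\varphi_{i\nu}$ reduces the order of such a pole. Writing $\nu=-iy$ with $\RE y>0$ and using $\varphi_{i\nu}=\varphi_y$, the vanishing locus of $\varphi$ in the relevant half-plane is described by Lemma~\ref{lem:SphericalDistributions}\eqref{lem:SphericalDistributions3}: $\varphi_y=0$ happens exactly when $\F=\R$ with $q$ even, or $\F=\C,\H,\O$, and $y\in\rho+2\N$.

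Next I would verify that the zeros of $\varphi_s$ at $s=\rho+2k$ are \emph{simple}. This follows from \eqref{eq:DerivativeZetaS}, which identifies the first derivative at $s=\rho+2k$ with $\eta_k+\theta_k$; since $\eta_k$ and $\theta_k$ span a two-dimensional space of spherical distributions, their sum is nonzero, so $\varphi_s=(s-(\rho+2k))\cdot(\eta_k+\theta_k)+O((s-(\rho+2k))^2)$, a simple zero. Consequently, at each such point a pole of $(c(\cdot)c(-\cdot))^{-1}$ drops by exactly one in order, and simple poles are killed while double poles become simple.

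The remaining work is bookkeeping: I would split into the four parity cases of $(p,q)$ for $d=1$ together with the unified case $d=2,4,8$ and match them against Lemma~\ref{lem:SingularitiesCFunction}. For $d=2,4,8$ or $d=1$ with $q$ even, the simple zeros of $\varphi_y$ on $\rho+2\N$ convert the double poles of $(c(\cdot)c(-\cdot))^{-1}$ at $\rho+2\N$ to simple poles, and after taking the union with the simple poles on $(0,\rho)\cap(\rho+2\Z)$ one obtains exactly $\{y>0:y\in\rho+2\Z\}$; when additionally $d=1$ with $p$ odd and $q$ even, the extra single poles on $\rho+2\Z+1$ from Lemma~\ref{lem:SingularitiesCFunction} persist since $(\rho+2\Z+1)\cap(\rho+2\N)=\emptyset$. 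For $d=1$ with $p,q$ both odd, Lemma~\ref{lem:SingularitiesCFunction} already lists only the single poles on $\rho+2\Z+1$ and $\varphi_y$ is nowhere zero in this case, so nothing cancels. Finally, for $d=1$ with $p$ even and $q$ odd, the hypothesis for $\varphi_y=0$ fails, so the double poles on $\rho+2\N$ remain double, matching the stated result.

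I do not expect a serious obstacle: the only non-mechanical step is checking that the zero at $s=\rho+2k$ really is simple, which reduces to the non-triviality of $\eta_k+\theta_k$ recorded in \eqref{eq:DerivativeZetaS}. The rest is a disjoint-union check between the sets $\rho+2\Z$, $\rho+2\N$ and $\rho+2\Z+1$, whose parities relative to $\rho$ (which depends on $d$, $p$, $q$ via $\rho=\frac{d(p+q)-2}{2}$) must be tracked carefully; but there are no coincidences between these families that could cause unexpected cancellations beyond those already accounted for.
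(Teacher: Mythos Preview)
Your proposal is correct and follows exactly the approach the paper intends: the paper's proof is the single sentence ``Combining Lemma~\ref{lem:SingularitiesCFunction} with Lemma~\ref{lem:SphericalDistributions} immediately yields'', and you have simply spelled out the case-by-case bookkeeping that this sentence compresses. Your explicit verification that the zeros of $\varphi_s$ at $s=\rho+2k$ are simple (via \eqref{eq:DerivativeZetaS} and the linear independence of $\eta_k,\theta_k$) is a detail the paper leaves implicit but which is indeed needed, so your write-up is if anything more complete than the paper's.
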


We now use Remark~\ref{rem:ResidueForSingleDoublePole} to compute the residues of \eqref{eq:SingularitiesLowerHalfPlane}.

\subsubsection{The case $\F=\R$ with $p$ and $q$ even and $\F=\C,\H,\O$}\label{sec:ResonancesCHO}

We first note that by Corollary~\ref{cor:SingularitiesZetaOverC} the first two terms in \eqref{eq:SingularitiesLowerHalfPlane} have poles at the same places $\zeta=-is$, $s\in E$. The corresponding residue is
\begin{multline}
	\frac{i}{4(-is)}\Res_{\zeta=-is}\frac{(f*\varphi_{-i\zeta})(x)}{c(i\zeta)c(-i\zeta)}+\frac{1}{2}\frac{1}{(-is)-is}\Res_{\sigma=s}\frac{(f*\varphi_\sigma)(x)}{c(\sigma)c(-\sigma)}\\
	= \frac{i}{2s}\Res_{\sigma=s}\frac{(f*\varphi_\sigma)(x)}{c(\sigma)c(-\sigma)}.\label{eq:ResidueIZeta12}
\end{multline}
Moreover $E=D=D_1\cup D_2$, so the third and the fourth term in \eqref{eq:SingularitiesLowerHalfPlane} have poles at the same places. We look at $s\in D_1$ and $s\in D_2$ separately:
\begin{itemize}
	\item For $s\in D_1$, the third term in \eqref{eq:SingularitiesLowerHalfPlane} has a single pole at $\zeta=-is$ with residue
	$$ -\frac{(f*\varphi_s)(x)}{(-is)-is}\Res_{\sigma=s}\frac{1}{c(\sigma)c(-\sigma)} = -\frac{i}{2s}(f*\varphi_s)(x)\Res_{\sigma=s}\frac{1}{c(\sigma)c(-\sigma)}. $$
	Since in this case, $\varphi_s\neq0$, this term cancels with \eqref{eq:ResidueIZeta12}, so $\widetilde{R}(\zeta)$ has no pole at $\zeta=-is$ for $s\in D_1$.
	\item For $s=\rho+2k\in D_2$, the fourth term in \eqref{eq:SingularitiesLowerHalfPlane} has a single pole at $\zeta=-is$ with residue
	$$ -\frac{(f*\theta_k)(x)}{(-is)-is}c_{-2}\left[\frac{1}{c(\sigma)c(-\sigma)};\rho+2k\right] = -\frac{i(f*\theta_k)(x)}{2s}c_{-2}\left[\frac{1}{c(\sigma)c(-\sigma)};\rho+2k\right]. $$
	To compare this term with \eqref{eq:ResidueIZeta12}, we rewrite the latter using \eqref{eq:DerivativeZetaS}:
	$$ \Res_{\sigma=s}\frac{(f*\varphi_\sigma)(x)}{c(\sigma)c(-\sigma)} = \left((f*\eta_k)(x)+(f*\theta_k)(x)\right)c_{-2}\left[\frac{1}{c(\sigma)c(-\sigma)};s\right]. $$
	It follows that the terms with $(f*\theta_k)(x)$ cancel and the residue of \eqref{eq:SingularitiesLowerHalfPlane} at $\zeta=-is=-i(\rho+2k)$ equals
	$$ \frac{i}{2s}(f*\eta_k)(x)c_{-2}\left[\frac{1}{c(\sigma)c(-\sigma)};s\right]. $$
\end{itemize}

Together with Proposition~\ref{prop:PoissonImages2}~\eqref{prop:PoissonImages22} this proves:

\begin{thm}\label{thm:ResonancesLowerHalfPlane1}
	For $\F=\R$ with $p$ and $q$ even and $\F=\C,\H,\O$, the poles of $\widetilde{R}(\zeta)$ in the lower half plane $\{\IM\zeta<0\}$ are the values $\zeta\in-i(\rho+2\N)$. The corresponding residue representation at $\zeta=-i(\rho+2k)$ is the irreducible finite-dimensional representation of $G$ that occurs as a subrepresentation of $\calE_{\rho+2k}(\Xi)$.
\end{thm}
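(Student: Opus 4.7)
My plan is to read off the singularities of $\widetilde{R}(\zeta)=I(\zeta)-D(\zeta)$ in the lower half plane from the formal expression \eqref{eq:SingularitiesLowerHalfPlane}, then match the resulting residues against the list in Proposition~\ref{prop:PoissonImages2}. By Corollary~\ref{cor:SingularitiesZetaOverC}, in the present case (that is, $\F=\R$ with $p,q$ even, or $\F=\C,\H,\O$) the function $\nu\mapsto(c(i\nu)c(-i\nu))^{-1}\varphi_{i\nu}$ has only simple poles in $\{\IM\nu<0\}$, located at $\nu=-is$ with $s>0$, $s\in\rho+2\Z$. These values split as $E=D_1\cup D_2$, where $D_1$ collects the $s\in(0,\rho)$ and $D_2=\rho+2\N$, so the potential poles of $\widetilde{R}(\zeta)$ on the lower half plane all lie in $-i(D_1\cup D_2)$.

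The next step is to compute the residues at each candidate pole $\zeta=-is$ by summing the contributions of the four terms in \eqref{eq:SingularitiesLowerHalfPlane}. A short calculation shows that the first two terms together give \eqref{eq:ResidueIZeta12}, i.e.\ $\frac{i}{2s}\Res_{\sigma=s}\frac{(f*\varphi_\sigma)(x)}{c(\sigma)c(-\sigma)}$. For $s\in D_1$ the distribution $\varphi_s$ is holomorphic and nonzero by Lemma~\ref{lem:SphericalDistributions}, so this residue equals $\frac{i}{2s}(f*\varphi_s)(x)\Res_{\sigma=s}\frac{1}{c(\sigma)c(-\sigma)}$, which exactly cancels the corresponding residue of the third term in \eqref{eq:SingularitiesLowerHalfPlane}. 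Thus $\widetilde{R}(\zeta)$ has no pole at $\zeta=-is$ for $s\in D_1$.

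For $s=\rho+2k\in D_2$ one uses the fact that $(c(\sigma)c(-\sigma))^{-1}$ has a double pole at $\sigma=s$ while $\varphi_\sigma$ has a simple zero there, so the product has a simple pole. Using the Taylor expansion $\varphi_\sigma=(\sigma-s)(\eta_k+\theta_k)+O((\sigma-s)^2)$ given by \eqref{eq:DerivativeZetaS}, the residue of the first two terms becomes
\[
\frac{i}{2s}\bigl((f*\eta_k)(x)+(f*\theta_k)(x)\bigr)\,c_{-2}\!\left[\tfrac{1}{c(\sigma)c(-\sigma)};\,s\right],
\]
while the fourth term of \eqref{eq:SingularitiesLowerHalfPlane} contributes $-\frac{i}{2s}(f*\theta_k)(x)\,c_{-2}[\tfrac{1}{c(\sigma)c(-\sigma)};s]$. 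The $(f*\theta_k)$ parts cancel and what remains is a genuine simple pole of $\widetilde{R}(\zeta)$ at $\zeta=-is$ whose residue is a nonzero scalar multiple of the convolution operator $f\mapsto f*\eta_k$.

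Finally, Proposition~\ref{prop:PoissonImages2}\eqref{prop:PoissonImages22} identifies the image of $f\mapsto f*\eta_k$ as the irreducible finite-dimensional subrepresentation of $\calE_{\rho+2k}(\Xi)$, which yields the asserted residue representation. The only subtle point is the bookkeeping of orders of poles and zeros: one must verify that in the present case $(c(\sigma)c(-\sigma))^{-1}$ has exactly a double pole on $\rho+2\N$ (so that multiplication by $\varphi_\sigma$ produces a simple, not removable, singularity), and that the coefficient $c_{-2}[\tfrac{1}{c(\sigma)c(-\sigma)};s]$ is nonzero, both of which follow from Lemma~\ref{lem:SingularitiesCFunction} and the explicit formula \eqref{eq:PlancherelDensity1}. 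This cancellation of $\theta_k$ and survival of $\eta_k$ is the main structural point of the proof; everything else is the residue calculus organized according to the dichotomy $D=D_1\cup D_2$.
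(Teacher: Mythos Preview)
Your proposal is correct and follows essentially the same approach as the paper: you use Corollary~\ref{cor:SingularitiesZetaOverC} to locate the potential poles, compute the combined residue \eqref{eq:ResidueIZeta12} of the first two terms in \eqref{eq:SingularitiesLowerHalfPlane}, observe the cancellation with the $D_1$-sum when $\varphi_s\neq0$, and for $s\in D_2$ use \eqref{eq:DerivativeZetaS} to split the residue into an $\eta_k$- and a $\theta_k$-part so that the latter cancels with the $D_2$-sum, leaving only the $\eta_k$-contribution identified via Proposition~\ref{prop:PoissonImages2}\eqref{prop:PoissonImages22}. Your additional remark about verifying the double pole of $(c(\sigma)c(-\sigma))^{-1}$ and the nonvanishing of its $c_{-2}$-coefficient via Lemma~\ref{lem:SingularitiesCFunction} is a welcome clarification that the paper leaves implicit.
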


\subsubsection{The case $\F=\R$ with $p$ odd and $q$ even}

In view of Corollary~\ref{cor:SingularitiesZetaOverC}, all computations in Section~\ref{sec:ResonancesCHO} remain valid. In addition, there are simple poles of $(c(i\zeta)c(-i\zeta))^{-1}\varphi_{i\zeta}$ at $\zeta=-is$ with $s>0$ and $s\in\rho+2\Z+1$. Here, the same computation as in \eqref{eq:ResidueIZeta12} shows that the corresponding residue of \eqref{eq:SingularitiesLowerHalfPlane} is 
$$ \frac{i}{2s}\Res_{\sigma=s}\frac{(f*\varphi_\sigma)(x)}{c(\sigma)c(-\sigma)} = \frac{i}{2s}(f*\varphi_s)(x)\Res_{\sigma=s}\frac{1}{c(\sigma)c(-\sigma)}, $$
where we have used that $\varphi_s\neq0$ by Lemma~\ref{lem:SphericalDistributions}. Together with Proposition~\ref{prop:PoissonImages1}~\eqref{prop:PoissonImages1b} this shows:

\begin{thm}\label{thm:ResonancesLowerHalfPlane2}
	For $\F=\R$ with $p$ odd and $q$ even, the poles of $\widetilde{R}(\zeta)$ in the lower half plane $\{\IM\zeta<0\}$ are the values $\zeta=-is$, $s>0$, with either $s\in\rho+2\N$ or $s\in\rho+2\Z+1$. The residue representation at $\zeta=-i(\rho+2k)$ is the irreducible finite-dimensional representation of $G$ that occurs as a subrepresentation of $\calE_{\rho+2k}(\Xi)$, and the residue representation at $\zeta=-is$, $s\in\rho+2\Z+1$, is the irreducible subrepresentation of $\calE_s(\Xi)$ with $K$-types $\calY_{\ell,m}$ with $m-\ell<s-\rho+p$, each occurring with multiplicity one.
\end{thm}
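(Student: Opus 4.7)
The plan is to proceed along the same lines as the proof of Theorem~\ref{thm:ResonancesLowerHalfPlane1} in Section~\ref{sec:ResonancesCHO}, exploiting the fact that the case $\F=\R$ with $p$ odd and $q$ even differs from the case treated there only by the presence of additional simple poles of $(c(i\nu)c(-i\nu))^{-1}\varphi_{i\nu}$ at $\nu=-is$ with $s>0$, $s\in\rho+2\Z+1$, as recorded in Corollary~\ref{cor:SingularitiesZetaOverC}. Accordingly, I would first verify that in this case $D_1=\{s\in(0,\rho):s\in\rho+2\Z\}$ and $D_2=\rho+2\N$, so the discrete part of the set of singularities in \eqref{eq:SingularitiesLowerHalfPlane} splits into the ``old'' poles at $s\in\rho+2\Z$, $s>0$, and the ``new'' poles at $s\in\rho+2\Z+1$, $s>0$.

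For the old poles at $s\in\rho+2\Z$, $s>0$, every residue computation from Section~\ref{sec:ResonancesCHO} carries over verbatim, since the argument there depended only on $q$ being even (to ensure $\varphi_{\rho+2k}=0$ and to bring in the distributions $\eta_k,\theta_k$ via \eqref{eq:DerivativeZetaS}) and used $E=D_1\cup D_2$ in the indexing of the sum from \eqref{eq:integralres3}. Thus for $s\in D_1$ the residues coming from the first term of \eqref{eq:SingularitiesLowerHalfPlane}, the sum over $E$, and the sum over $D_1$ cancel pairwise, and $\widetilde R(\zeta)$ has no pole at $\zeta=-is$; while for $s=\rho+2k\in D_2$ the surviving residue is $\tfrac{i}{2s}(f*\eta_k)(x)\,c_{-2}[(c(\sigma)c(-\sigma))^{-1};s]$, whose image is identified by Proposition~\ref{prop:PoissonImages2}~\eqref{prop:PoissonImages22} with the finite-dimensional subrepresentation of $\calE_{\rho+2k}(\Xi)$.

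For the new poles at $s\in\rho+2\Z+1$, $s>0$, only the first two terms of \eqref{eq:SingularitiesLowerHalfPlane} contribute, since $s\notin D=D_1\cup D_2$. The key observation is that for $p$ odd and $q$ even, $\rho$ is a half-odd integer, so the sets $\rho+2\Z$ and $\rho+2\Z+1$ are disjoint; hence $s\notin\pm(\rho+2\N)$ and Lemma~\ref{lem:SphericalDistributions}~\eqref{lem:SphericalDistributions3} gives $\varphi_s\neq0$. Consequently $(c(\sigma)c(-\sigma))^{-1}\varphi_\sigma$ has a simple pole at $\sigma=s$ of the form $(f*\varphi_s)(x)\cdot\Res_{\sigma=s}(c(\sigma)c(-\sigma))^{-1}$, and the same computation as in \eqref{eq:ResidueIZeta12} yields the non-trivial residue
\[
	\frac{i}{2s}(f*\varphi_s)(x)\Res_{\sigma=s}\frac{1}{c(\sigma)c(-\sigma)}.
\]
Identifying its image via Proposition~\ref{prop:PoissonImages1}~\eqref{prop:PoissonImages1b} completes the statement about the residue representation at $\zeta=-is$.

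The main point requiring care is the parity/disjointness argument guaranteeing $\varphi_s\neq 0$ at the new poles: this is precisely what ensures that the new residue does not cancel against anything (there is nothing with matching pole location in the discrete sum over $D$) and produces a genuine, infinite-dimensional, non-unitarizable residue representation, in contrast to the $D_2$-type finite-dimensional residues.
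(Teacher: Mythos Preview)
Your proposal is correct and follows essentially the same route as the paper: first observe that the analysis of Section~\ref{sec:ResonancesCHO} carries over verbatim for the poles at $s\in\rho+2\Z$, then handle the additional simple poles at $s\in\rho+2\Z+1$ by the computation \eqref{eq:ResidueIZeta12}, invoking $\varphi_s\neq0$ and Proposition~\ref{prop:PoissonImages1}~\eqref{prop:PoissonImages1b}. One small remark: the disjointness of $\rho+2\Z$ and $\rho+2\Z+1$ holds for any $\rho$, so the half-odd-integer observation is not needed; what actually gives $s\notin\pm(\rho+2\N)$ is simply that $s\in\rho+2\Z+1$ rules out $\rho+2\N$ by parity and $s>0$ rules out $-(\rho+2\N)\subseteq(-\infty,-\rho]$.
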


\subsubsection{The case $\F=\R$ with $p$ and $q$ odd}\label{sec:ResidueRepsPoddQodd}

By Corollary~\ref{cor:SingularitiesZetaOverC} the singularities of $\zeta\mapsto(c(i\zeta)c(-i\zeta))^{-1}\varphi_{i\zeta}$ in the lower half plane are single poles at $\zeta=-is$, $s>0$ with $s\in\rho+2\Z+1$, so the same computation as in \eqref{eq:ResidueIZeta12} shows that the residue of the first two terms in \eqref{eq:SingularitiesLowerHalfPlane} at $\zeta=-is$ equals
$$ \frac{i}{2s}\Res_{\sigma=s}\frac{(f*\varphi_\sigma)(x)}{c(\sigma)c(-\sigma)}. $$
While the fourth term in \eqref{eq:SingularitiesLowerHalfPlane} vanishes in this case (here $D_2=\emptyset$), the third term also has a single pole at $\zeta=-is$, $s>0$ with $s\in\rho+2\Z+1$, with residue equal to
$$ -\frac{(f*\varphi_s)(x)}{(-is)-is}\Res_{\sigma=s}\frac{1}{c(\sigma)c(-\sigma)} = -\frac{i(f*\varphi_s)(x)}{2s}\Res_{\sigma=s}\frac{1}{c(\sigma)c(-\sigma)}. $$
Since $\varphi_s\neq0$ by Lemma~\ref{lem:SphericalDistributions}, these two residues cancel, so we have:

\begin{thm}\label{thm:ResonancesLowerHalfPlane3}
	For $\F=\R$ with $p$ and $q$ odd, the resolvent $\widetilde{R}(\zeta)$ is holomorphic in the lower half plane $\{\IM\zeta<0\}$.
\end{thm}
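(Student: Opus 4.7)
The plan is to track the potential singularities of the combined formula \eqref{eq:SingularitiesLowerHalfPlane} in the lower half plane and show that they cancel pairwise. The key inputs for the parity case $\F=\R$ with $p$ and $q$ odd are Corollary~\ref{cor:SingularitiesZetaOverC} and the definition of $D$. Corollary~\ref{cor:SingularitiesZetaOverC} tells us that $\nu\mapsto(c(i\nu)c(-i\nu))^{-1}\varphi_{i\nu}$ has only simple poles in the lower half plane, located at $\nu=-is$ with $s>0$, $s\in\rho+2\Z+1$; equivalently $E=\{s>0:s\in\rho+2\Z+1\}$. On the other hand, $D_1=\{s>0:s\in\rho+2\Z+1\}=E$ and $D_2=\emptyset$ in this case, so the fourth sum in \eqref{eq:SingularitiesLowerHalfPlane} drops out and the first two terms and the third term can have poles only at the common discrete set $\zeta=-is$ with $s\in D_1$.

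First I would run the same elementary residue computation as in \eqref{eq:ResidueIZeta12}: since both of the first two terms contribute simple poles at $\zeta=-is$, their combined residue is
\begin{equation*}
\frac{i}{2s}\Res_{\sigma=s}\frac{(f*\varphi_\sigma)(x)}{c(\sigma)c(-\sigma)}.
\end{equation*}
By Lemma~\ref{lem:SphericalDistributions}~\eqref{lem:SphericalDistributions3}, the vanishing locus of $\varphi_s$ for $\F=\R$ requires $q$ even, so in our case $\varphi_s\neq0$ at every $s\in\rho+2\Z+1$. Together with the holomorphy of $s\mapsto\varphi_s$ from Lemma~\ref{lem:SphericalDistributions}~\eqref{lem:SphericalDistributions1}, this lets us factor the residue as
\begin{equation*}
\frac{i}{2s}(f*\varphi_s)(x)\Res_{\sigma=s}\frac{1}{c(\sigma)c(-\sigma)}.
\end{equation*}

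Next, the residue of the third term of \eqref{eq:SingularitiesLowerHalfPlane} at the simple pole $\zeta=-is$ is
\begin{equation*}
-\frac{(f*\varphi_s)(x)}{-2is}\Res_{\sigma=s}\frac{1}{c(\sigma)c(-\sigma)}=-\frac{i(f*\varphi_s)(x)}{2s}\Res_{\sigma=s}\frac{1}{c(\sigma)c(-\sigma)},
\end{equation*}
which is exactly the negative of the previous contribution. Hence all residues at $\zeta=-is$ cancel, and since there are no other candidate singularities in $\{\IM\zeta<0\}$, the resolvent is holomorphic there.

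The only real obstacle is careful bookkeeping of signs and of the coincidence of the pole sets $E$ and $D_1$; once that is in place the argument is essentially a repetition of the residue calculation that produced \eqref{eq:ResidueIZeta12} in Section~\ref{sec:ResonancesCHO}, and the non-vanishing of $\varphi_s$ provided by Lemma~\ref{lem:SphericalDistributions}~\eqref{lem:SphericalDistributions3} is what makes the cancellation possible.
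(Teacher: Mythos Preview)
Your argument is correct and follows essentially the same route as the paper: identify $E=D_1=\{s>0:s\in\rho+2\Z+1\}$ and $D_2=\emptyset$ via Corollary~\ref{cor:SingularitiesZetaOverC}, compute the residue of the first two terms of \eqref{eq:SingularitiesLowerHalfPlane} as in \eqref{eq:ResidueIZeta12}, and use the non-vanishing of $\varphi_s$ from Lemma~\ref{lem:SphericalDistributions}~\eqref{lem:SphericalDistributions3} to see that it cancels the residue of the third term. Your explicit justification for factoring $\varphi_s$ out of the residue (holomorphy plus non-vanishing) is a nice touch that the paper leaves implicit.
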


\subsubsection{The case $\F=\R$ with $p$ even and $q$ odd}\label{sec:StrangeCase}

As in Section~\ref{sec:ResidueRepsPoddQodd}, we have $D_2=\emptyset$ and hence \eqref{eq:SingularitiesLowerHalfPlane} only contains three terms. In view of Corollary~\ref{cor:SingularitiesZetaOverC}, the potential poles of the expression \eqref{eq:SingularitiesLowerHalfPlane} are at $\zeta=-is$ with either $s\in D_1=(\rho+2\Z+1)\cap(0,\infty)$ or $s\in(\rho+2\Z)\cap(0,\infty)$.

For $s\in\rho+2\Z+1$, $s>0$, the same computations as in Section~\ref{sec:ResidueRepsPoddQodd}, show that all three terms in \eqref{eq:SingularitiesLowerHalfPlane} have a simple pole at $\zeta=-is$, but the residues cancel, so $\widetilde{R}(\zeta)$ is regular at these points.

For $s\in(\rho+2\Z)$, $0<s<\rho$, one can argue as in Section~\ref{sec:ResonancesCHO} that the first two terms in \eqref{eq:SingularitiesLowerHalfPlane} have simple poles at $\zeta=-is$ and the same computation as in \eqref{eq:ResidueIZeta12} produces the following residue of $\widetilde{R}(\zeta)$ at $\zeta=-is$:
$$ \frac{i}{2s}\Res_{\sigma=s}\frac{(f*\varphi_\sigma)(x)}{c(\sigma)c(-\sigma)} = \frac{i(f*\varphi_\sigma)(x)}{2s}\Res_{\sigma=s}\frac{1}{c(\sigma)c(-\sigma)}. $$

Finally, for $s\in\rho+2\N$, the function $(c(\sigma)c(-\sigma))^{-1}$ has a double pole at $\sigma=s$ by Corollary~\ref{cor:SingularitiesZetaOverC}. In view of Remark~\ref{rem:ResidueForSingleDoublePole} we can rewrite the first two terms of \eqref{eq:SingularitiesLowerHalfPlane} as
\begin{multline*}
	\frac{i}{4\zeta}\frac{(f*\varphi_{-i\zeta})(x)}{c(i\zeta)c(-i\zeta)}+\frac{1}{2}\frac{1}{\zeta^2+s^2}c_{-1}\left[\frac{(f*\varphi_\sigma)(x)}{c(\sigma)c(-\sigma)};\sigma=s\right]\\-\frac{1}{2}\frac{2s}{(\zeta^2+s^2)^2}c_{-2}\left[\frac{(f*\varphi_\sigma)(x)}{c(\sigma)c(-\sigma)};\sigma=s\right].
\end{multline*}
The first and the last term in this expression have a pole of order two at $\zeta=-is$, and adding the $c_{-2}$-terms of their Laurent expansion around $\zeta=-is$ gives
$$ c_{-2}\Big[\widetilde{R}(\zeta)f(x);\zeta=-is\Big] = \frac{(f*\varphi_s)(x)}{2s}c_{-2}\left[\frac{1}{c(\sigma)c(-\sigma)};\sigma=s\right]. $$
(We remark at this point that in the same expression the singularities of the three terms around $\zeta=+is$ cancel, so there is no contradiction with the observation from Section~\ref{sec:PolesUpperHalfPlane} that $I(\zeta)$ is holomorphic in the upper half plane.)

Identifying the residue representations using Proposition~\ref{prop:PoissonImages1}~\eqref{prop:PoissonImages1c}, we obtain:

\begin{thm}\label{thm:ResonancesLowerHalfPlane4}
	For $\F=\R$ with $p$ even and $q$ odd, the singularities of $\widetilde{R}(\zeta)$ in the lower half plane $\{\IM\zeta<0\}$ are at the values $\zeta=-is$, $s\in\rho+2\Z$, $s>0$. For $0<s<\rho$ the resolvent $\widetilde{R}(\zeta)$ has a simple pole at $\zeta=-is$ with residue representation isomorphic to the irreducible subrepresentation of $\calE_s(\Xi)$ with $K$-types $\calY_{\ell,m}$ with $m-\ell<s-\rho+p$, each occurring with multiplicity one. For $s\geq\rho$ the resolvent $\widetilde{R}(\zeta)$ has a pole of order two at $\zeta=-is$ with residue representation the irreducible finite-dimensional representation of $G$ that occurs as a subrepresentation of $\calE_s(\Xi)$.
\end{thm}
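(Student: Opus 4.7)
The plan is to locate and compute the singularities of $\widetilde{R}(\zeta)=I(\zeta)-D(\zeta)$ in the lower half plane directly from \eqref{eq:SingularitiesLowerHalfPlane}. Since $\F=\R$ with $q$ odd, the definitions give $D_2=\emptyset$, so the $\theta_k$-sum vanishes and only three terms survive: the ``boundary'' term $\tfrac{i}{4\zeta}(f*\varphi_{-i\zeta})(x)/c(i\zeta)c(-i\zeta)$, the residue sum over $s\in E$, and the discrete-spectrum sum over $s\in D_1=(\rho+2\Z+1)\cap(0,\infty)$. Corollary~\ref{cor:SingularitiesZetaOverC} then lists the candidate poles in $\{\IM\zeta<0\}$: single poles at $\zeta=-is$ for $s\in(\rho+2\Z+1)\cap(0,\infty)$, single poles at $\zeta=-is$ for $s\in(\rho+2\Z)\cap(0,\rho)$, and double poles at $\zeta=-is$ for $s\in\rho+2\N$. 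Before diving into cases I would state a key preliminary observation: Lemma~\ref{lem:SphericalDistributions}(\ref{lem:SphericalDistributions3}) does not apply here (since $q$ is odd and $\F=\R$), so $\varphi_s\neq 0$ for every $s\in\C$.

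The case-by-case analysis then proceeds as follows. For $s\in(\rho+2\Z+1)\cap(0,\infty)$ (which lies in $D_1$), all three surviving terms have simple poles at $\zeta=-is$; the computation in \eqref{eq:ResidueIZeta12} shows that the sum of the first two residues equals $\frac{i}{2s}(f*\varphi_s)(x)\Res_{\sigma=s}(c(\sigma)c(-\sigma))^{-1}$, which exactly cancels the third-term contribution, so no pole survives. For $s\in\rho+2\Z$ with $0<s<\rho$ (not in $D_1$), only the first two terms contribute, and the same calculation yields a genuine simple pole with residue a nonzero scalar multiple of $(f*\varphi_s)(x)\Res_{\sigma=s}(c(\sigma)c(-\sigma))^{-1}$; nonvanishing uses $\varphi_s\neq 0$.

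The main obstacle, and the most delicate step, is the case $s\in\rho+2\N$, where $(c(\sigma)c(-\sigma))^{-1}$ has a double pole and so both the first and the second term in \eqref{eq:SingularitiesLowerHalfPlane} contribute potential singularities of order two at $\zeta=-is$. The plan is to use Remark~\ref{rem:ResidueForSingleDoublePole} to decompose the second term into a $(\zeta^2+s^2)^{-1}$ piece (with coefficient $c_{-1}$) and a $-2s(\zeta^2+s^2)^{-2}$ piece (with coefficient $c_{-2}$), then Laurent-expand the first term around $\zeta=-is$, and finally collect like powers of $(\zeta+is)$. The careful bookkeeping should reveal that the order-two singularities combine into a pole of order two whose $c_{-2}$-coefficient equals $\frac{(f*\varphi_s)(x)}{2s}\,c_{-2}\bigl[(c(\sigma)c(-\sigma))^{-1};s\bigr]$, while the order-one parts organize regularly. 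As a consistency check I would run the same expansion near $\zeta=+is$ and verify that the singularities there cancel, matching Theorem~\ref{thm:ResonancesUpperHalfPlane}; this doubles as a sanity check on the bookkeeping.

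Finally, the residue representations are identified by invoking Proposition~\ref{prop:PoissonImages1}(\ref{prop:PoissonImages1c}) for the image of the convolution operator $f\mapsto f*\varphi_s$: the non-unitarizable subrepresentation of $\calE_s(\Xi)$ with $K$-types $\calY_{\ell,m}$ for $m-\ell<s-\rho+p$ when $0<s<\rho$, and the finite-dimensional subrepresentation with $K$-types $\calY_{\ell,m}$, $\ell+m\leq s-\rho$, when $s\geq\rho$. I expect the arithmetic of the single-pole cases to parallel the earlier subsections verbatim; essentially the only new work is the double-pole Laurent analysis, which is where the qualitative novelty (order-two resonances) of this parity configuration shows up.
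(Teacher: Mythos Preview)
Your proposal is correct and follows essentially the same approach as the paper: you reduce to three terms via $D_2=\emptyset$, use Corollary~\ref{cor:SingularitiesZetaOverC} to list the candidate poles, handle the three cases (cancellation at $s\in\rho+2\Z+1$, simple poles for $0<s<\rho$, and the double-pole Laurent analysis via Remark~\ref{rem:ResidueForSingleDoublePole} at $s\in\rho+2\N$) exactly as the paper does, including the consistency check at $\zeta=+is$, and then invoke Proposition~\ref{prop:PoissonImages1}~\eqref{prop:PoissonImages1c} for the residue representations.
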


\appendix

\section{Estimates for the gamma function}

We show some elementary estimates for the classical gamma function $\Gamma(z)$. For fixed $\varepsilon>0$, Stirling's Formula asserts that (see e.g. \cite[Corollary 1.4.3]{AAR99})
\begin{equation}
	\Gamma(z) \sim \sqrt{2\pi}z^{z-\frac{1}{2}}e^{-z} \qquad \mbox{as $|z|\to\infty$ with $|\arg z|\leq\pi-\varepsilon$.}\label{eq:Stirling}
\end{equation}

\begin{lem}[{see \cite[Corollary 1.4.4]{AAR99}}]\label{lem:Gamma1}
	For $K>0$ we have
	$$ |\Gamma(x+iy)| = \sqrt{2\pi}|y|^{x-\frac{1}{2}}e^{-\frac{\pi}{2}|y|}(1+\calO(|y|^{-1})) \qquad \mbox{as $|y|\to\infty$ with $|x|\leq K$.} $$
\end{lem}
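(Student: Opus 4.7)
The plan is to derive the estimate directly from Stirling's formula \eqref{eq:Stirling}. For $z=x+iy$ with $|x|\leq K$ and $|y|\to\infty$ we certainly have $|\arg z|\to \pi/2$, well inside the sector of validity of \eqref{eq:Stirling}, so
$$ \Gamma(z)=\sqrt{2\pi}\,z^{z-1/2}e^{-z}\bigl(1+\calO(|z|^{-1})\bigr). $$
Since $|e^{-z}|=e^{-x}$ and $|z^{z-1/2}|=\exp\RE\bigl((z-1/2)\log z\bigr)$, the task reduces to an asymptotic evaluation of $\RE\bigl((z-1/2)\log z\bigr)$ that is uniform in $|x|\leq K$.

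Writing $z=re^{i\theta}$ with $r=|z|$ and $\theta=\arg z\in(-\pi,\pi]$, one has $\log z=\log r+i\theta$ and hence
$$ \RE\bigl((z-1/2)\log z\bigr)=(x-1/2)\log r-y\theta. $$
From $r=|y|\sqrt{1+x^2/y^2}$ we obtain $\log r=\log|y|+\calO(|y|^{-2})$, uniformly in $|x|\leq K$. For the argument, I would use $\theta=\sign(y)\tfrac{\pi}{2}-\arctan(x/y)$ together with the Taylor expansion $\arctan(x/y)=x/y+\calO(|y|^{-3})$, which gives $-y\theta=-|y|\tfrac{\pi}{2}+x+\calO(|y|^{-2})$ uniformly. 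Combining these two expansions,
$$ \RE\bigl((z-1/2)\log z\bigr)=(x-1/2)\log|y|-|y|\tfrac{\pi}{2}+x+\calO(|y|^{-1}), $$
and exponentiating yields $|z^{z-1/2}|=|y|^{x-1/2}e^{-|y|\pi/2}e^{x}\bigl(1+\calO(|y|^{-1})\bigr)$.

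Putting the two pieces together, the factor $e^{x}$ coming from $|z^{z-1/2}|$ cancels the $e^{-x}=|e^{-z}|$ factor, and the Stirling error $\calO(|z|^{-1})$ is absorbed into $\calO(|y|^{-1})$ since $|z|\sim|y|$. This gives
$$ |\Gamma(x+iy)|=\sqrt{2\pi}\,|y|^{x-1/2}e^{-|y|\pi/2}\bigl(1+\calO(|y|^{-1})\bigr), $$
uniformly for $|x|\leq K$. There is no real obstacle in this proof; the only point to watch is that every error estimate — for $\log r$, for $\arctan(x/y)$, and for the Stirling remainder — is uniform on the strip $|x|\leq K$, which is immediate from the elementary Taylor expansions involved. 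This is precisely the bookkeeping carried out in \cite[Corollary 1.4.4]{AAR99}, to which one may alternatively simply refer.
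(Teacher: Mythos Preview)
Your proposal is correct. The paper itself gives no proof of this lemma but simply cites \cite[Corollary 1.4.4]{AAR99}, and your derivation from Stirling's formula \eqref{eq:Stirling} is precisely the standard argument underlying that citation.
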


\begin{lem}\label{lem:Gamma2}
	For $K>0$ and $a,b\in\R$ there exists $C>0$ such that
	$$ \left|\frac{\Gamma(z+a)}{\Gamma(z+b)}\right| \leq C(1+|z|)^{a-b} \qquad \mbox{as $|z|\to\infty$ with $\RE z\geq -K$.} $$
\end{lem}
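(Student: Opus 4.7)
My plan is to reduce the claim directly to Stirling's formula \eqref{eq:Stirling}, which is available in the region $\{|\arg z|\leq \pi-\varepsilon\}$ for any $\varepsilon>0$. The first step is to check that the region of interest is contained in such a sector for $|z|$ large: from $\cos(\arg z) = \RE z/|z| \geq -K/|z|$, we see that $\arg z$ is bounded away from $\pm\pi$ once $|z|$ is large enough (in terms of $K$), uniformly for $\RE z\geq -K$. In particular, for $|z|$ sufficiently large, both $z+a$ and $z+b$ lie in some common sector $|\arg w|\leq\pi-\varepsilon$, so Stirling applies uniformly to $\Gamma(z+a)$ and $\Gamma(z+b)$.

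Next, I would take the logarithm of the ratio and insert Stirling's expansion
$$ \log\Gamma(z+c) = (z+c-\tfrac{1}{2})\log(z+c) - (z+c) + \tfrac{1}{2}\log(2\pi) + O(|z|^{-1}) $$
for $c\in\{a,b\}$. Using the principal branch (legal because $|\arg z|\leq\pi-\varepsilon$), one has $\log(z+c) = \log z + c/z + O(|z|^{-2})$, and hence $(z+c-\tfrac{1}{2})\log(z+c) = (z+c-\tfrac{1}{2})\log z + c + O(|z|^{-1})$. Subtracting the two expansions, the constants $\tfrac{1}{2}\log(2\pi)$ cancel, the linear contributions $-(z+a)+(z+b)$ add up to $b-a$, and the logarithmic contributions give $(a-b)\log z + (a-b)$. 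Collecting terms yields
$$ \log\frac{\Gamma(z+a)}{\Gamma(z+b)} = (a-b)\log z + O(|z|^{-1}). $$

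Finally, exponentiating and taking absolute values, and using that $a-b\in\R$ gives $|z^{a-b}| = |z|^{a-b}$, one obtains
$$ \left|\frac{\Gamma(z+a)}{\Gamma(z+b)}\right| = |z|^{a-b}\bigl(1 + O(|z|^{-1})\bigr), $$
which is bounded by $C(1+|z|)^{a-b}$ for some constant $C=C(K,a,b)$ once $|z|$ is large enough. The main (and really only) obstacle is the first step, namely confirming that Stirling may be applied uniformly in the half-plane $\{\RE z\geq -K\}$ rather than only in a fixed sector. Once this geometric observation is in place, everything else is routine asymptotic bookkeeping, and the exponential factors $e^{\pm z}$ in Stirling's formula cancel exactly, which is precisely why a polynomial bound in $|z|$ results.
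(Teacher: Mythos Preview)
Your argument is correct and follows the same core idea as the paper (apply Stirling's formula \eqref{eq:Stirling} and track the asymptotics), but your organization is somewhat cleaner. The paper first invokes Lemma~\ref{lem:Gamma1} to dispose of the vertical strip $-K\leq\RE z\leq K'$ and only then applies Stirling in the right half-plane $\RE(z+a),\RE(z+b)>0$, finishing with an explicit $\arctan$ computation to control the argument terms. You instead observe directly that the whole region $\{\RE z\geq -K,\ |z|\ \text{large}\}$ sits in a fixed sector $|\arg z|\leq\pi-\varepsilon$, so Stirling applies in one stroke; working with the logarithmic form of Stirling and Taylor-expanding $\log(z+c)=\log z+c/z+O(|z|^{-2})$ then makes the cancellation transparent and avoids the $\arctan$ step entirely. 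Both routes yield the same asymptotic $|\Gamma(z+a)/\Gamma(z+b)|=|z|^{a-b}(1+O(|z|^{-1}))$.
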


\begin{proof}
	Applying Lemma~\ref{lem:Gamma1} for large enough $K$, we may assume that $\RE(z+a),\RE(z+b)>0$. Then $|\arg(z+a)|,|\arg(z+b)|\leq\frac{\pi}{2}$ and hence we can apply \eqref{eq:Stirling} to find
	$$ \frac{\Gamma(z+a)}{\Gamma(z+b)} \sim \frac{e^{(z+a-\frac{1}{2})\ln(z+a)}e^{-z-a}}{e^{(z+b-\frac{1}{2})\ln(z+b)}e^{-z-b}} = e^{b-a}e^{(z+a-\frac{1}{2})\ln(z+a)-(z+b-\frac{1}{2})\ln(z+b)}. $$
	Taking absolute values, writing $z=x+iy$ and using $\ln(w)=\ln|w|+i\arg w$ gives
	\begin{align*}
		\left|\frac{\Gamma(z+a)}{\Gamma(z+b)}\right| &\sim C_1|z+a|^{x+a-\frac{1}{2}}|z+b|^{-x-b+\frac{1}{2}}e^{y(\arg(z+b)-\arg(z+a))}\\
		&\sim C_2(1+|z|)^{a-b}e^{y(\arctan\frac{y}{x+b}-\arctan\frac{y}{x+a})}.
	\end{align*}
	Now,
	\begin{multline*}
		y\left(\arctan\frac{y}{x+b}-\arctan\frac{y}{x+a}\right) = y\arctan\frac{\frac{y}{x+b}-\frac{y}{x+a}}{1+\frac{y^2}{(x+a)(x+b)}}\\
		= y\arctan\frac{(a-b)y}{(x+a)(x+b)+y^2} = \frac{(a-b)y^2}{(x+a)(x+b)+y^2}+\calO\left(|z|^{-2}\right)
	\end{multline*}
	as $|z|\to\infty$ by a Taylor expansion. This expression is obviously bounded as $|z|\to\infty$ and the claim follows.
\end{proof}

\section{Proof of Lemma~\ref{lem:GrowthSphericalDistributions}}\label{app:PaleyWiener}

		Some of the arguments are inspired by the proof of the Paley--Wiener Theorem for the case $\F=\R$ and $p,q\in2\Z$ in \cite{And01}. We first prove estimates for $H\cap K$-invariant functions in a specific $K$-isotypic component and then combine these estimates to show the claim for all compactly supported smooth functions. For $T>0$ we let $C_T^\infty(X)\subseteq C_c^\infty(X)$ denote the space of all $f\in C^\infty(X)$ such that $f(ka_t\cdot x_0)=0$ for $t>T$. Note that $C_T^\infty(X)$ is invariant under the action of $K$.
		\begin{enumerate}[(1)]
			\item Let $C_T^\infty(X)_{\ell,m}$ denote the $K$-isotypic component of $\calY_{\ell,m}$ in $C_T^\infty(X)$ and let $C_T^\infty(X)^{H\cap K}_{\ell,m}$ denote the subspace of functions invariant under the action of $H\cap K=\operatorname{U}(1;\F)\times\operatorname{U}(p-1;\F)\times\operatorname{U}(q;\F)$. The latter space consists of functions $f$ of the form $f(ka_t\cdot x_0)=Y(kM_0)F(t)$ with $Y\in\calY_{\ell,m}^{H\cap K}$ and $F\in C_c^\infty(\R)$ even or odd (depending on the parity of $m$) with $\supp F\subseteq[-T,T]$. Note that $\dim\calY_{\ell,m}^{H\cap K}=1$ for $m=0$ and $\dim\calY_{\ell,m}^{H\cap K}=0$ for $m>0$, so we may assume $m=0$. We first show that for every $R,T>0$ and $n\in\N$ there exist $C,N>0$ such that
			\begin{equation}\label{eq:PWreduction1}
				|\langle\varphi_s,f\rangle| \leq C(1+|\ell|)^{-n}(1+|\IM s|)^Ne^{\frac{\pi}{2}|\IM s|}\|(1-\Delta_B)^Nf\|_\infty
			\end{equation}
			for all $\ell$ and all $f\in C_T^\infty(X)_{\ell,0}^{H\cap K}$. By \eqref{eq:FourierOnKfinite}, \eqref{eq:PoissonOnKfinite} and \eqref{eq:DefZeta} we have
			$$ \langle\varphi_s,f\rangle = \const\cdot\beta_{\ell,0}(s)\beta_{\ell,0}(-s)Y(eM_0)\int_0^\infty\Psi_{\ell,0}(t,s)F(t)A(t)\,dt, $$
			the constant only depending on $X$ and the normalization of the measures. By the proof of \cite[Lemma 2.3]{Koo75} there exists a constant $C_1>0$ such that
			$$ |\Psi_{\ell,0}(t,s)| \leq C_1(1+t)e^{(|\RE s|-\rho)t} \qquad \mbox{for all $t\geq0$ and all $\ell$}. $$
			Further, $A(t)\sim e^{2\rho t}$ as $t\to\infty$, so for $F\in C_c^\infty(\R)$ with $\supp F\subseteq[-T,T]$ we have
			$$ \left|\int_0^\infty \Psi_{\ell,0}(t,s)F(t)A(t)\,dt\right| \leq C_1\|F\|_\infty(1+T)e^{(|\RE s|+\rho)T}. $$
			This implies
			$$ |\langle\varphi_s,f\rangle| \leq C_1|\beta_{\ell,0}(s)\beta_{\ell,0}(-s)|(1+T)e^{(|\RE s|+\rho)T}\underbrace{\|F\|_\infty\|Y\|_\infty}_{=\|f\|_\infty}. $$
			Using Lemma~\ref{lem:Gamma1} and Lemma~\ref{lem:Gamma2} one can further show that for every $R>0$ there exist $C_2>0$ and $N>0$ such that
			$$ |\beta_{\ell,0}(s)\beta_{\ell,0}(-s)| \leq C_2(1+|\ell|)^N(1+|\IM s|)^N e^{\frac{\pi}{2}|\IM s|}\quad \mbox{for $|\RE s|\leq R$ and all $\ell$.} $$
			Together, this shows that for every $T>0$ and $R>0$ there exists $C_3>0$ and $N>0$ such that for $f(ka_t\cdot x_0)=Y(kM_0)F(t)$ with $Y\in\calY_{\ell,0}^{H\cap K}$ and $F\in C_c^\infty(\R)$ even, $\supp F\subseteq[-T,T]$, we have
			$$ |\langle\varphi_s,f\rangle| \leq C_3(1+|\ell|)^N(1+|\IM s|)^Ne^{\frac{\pi}{2}|\IM s|}\|f\|_\infty \qquad \mbox{whenever }|\RE s|\leq R. $$
			Now consider the Casimir element $\Delta_B$ of $K$ acting as a second order differential operator on $X$. We have $\Delta_Bf=-\ell(\ell+dp-2)f$ for $f\in C_T^\infty(X)_{\ell,0}$. Hence, for every $n\in\N$ we obtain
			\begin{align*}
				|\langle\varphi_s,f\rangle| &= (1+\ell(\ell+dp-2))^{-n}|\langle\varphi_s,(1-\Delta_B)^nf\rangle\\
				&\leq C(1+|\ell|)^{N-2n}(1+|\IM s|)^Ne^{\frac{\pi}{2}|\IM s|}\|(1-\Delta_B)^nf\|_\infty,
			\end{align*}
			for some constant $C>0$. This shows \eqref{eq:PWreduction1} after renaming $N$ and $n$.
			\item We now extend \eqref{eq:PWreduction1} to general $f\in C_c^\infty(X)_{\ell,0}$. The orthogonal projection $\calP_{H\cap K}:C_T^\infty(X)_{\ell,0}\to C_T^\infty(X)^{H\cap K}_{\ell,0}$ (with respect to the $L^2$-inner product) is given by
			$$ \calP_{H\cap K}f(x) = \int_{H\cap K}f(kx)\,dk, $$
			where $dk$ denotes the normalized Haar measure of $H\cap K$. It is clear that
			$$ \|\calP_{H\cap K}f\|_\infty \leq \|f\|_\infty. $$
			Then \eqref{eq:PWreduction1} and the $H$-invariance of $\varphi_s$ implies for arbitrary $f\in C_T^\infty(X)_{\ell,0}$:
			\begin{align*}
				|\langle\varphi_s,f\rangle| &= |\langle\varphi_s,\calP_{H\cap K}f\rangle|\\
				&\leq C(1+|\ell|)^{-n}(1+|\IM s|)^Ne^{\frac{\pi}{2}|\IM s|}\|(1-\Delta_B)^N\calP_{H\cap K}f\|_\infty\\
				&= C(1+|\ell|)^{-n}(1+|\IM s|)^Ne^{\frac{\pi}{2}|\IM s|}\|\calP_{H\cap K}(1-\Delta_B)^Nf\|_\infty\\
				&\leq C(1+|\ell|)^{-n}(1+|\IM s|)^Ne^{\frac{\pi}{2}|\IM s|}\|(1-\Delta_B)^Nf\|_\infty,
			\end{align*}
			where we have used that $\Delta_B$ commutes with the action of $K$ and hence with $\calP_{H\cap K}$.
			\item Next, we show that \eqref{eq:PWreduction1} implies the following claim: for all $R,T>0$ there exist $C,N>0$ such that
			\begin{equation}\label{eq:PWreduction2}
				|\langle\varphi_s,f\rangle| \leq C(1+|\IM s|)^Ne^{\frac{\pi}{2}|\IM s|}\|(1-\Delta_B)^Nf\|_\infty \qquad \mbox{for }|\RE s|\leq R,
			\end{equation}
			for all $f\in C_T^\infty(X)$. For $\ell,m$ let $\calP_{\ell,m}:C_T^\infty(X)\to C_T^\infty(X)$ denote the projection onto the isotypic component of $\calY_{\ell,m}$, i.e.
			$$ \calP_{\ell,m}f(x) = d_{\ell,m}\int_K\chi_{\ell,m}(k)^{-1}f(kx)\,dk, $$
			where $\chi_{\ell,m}$ is the character of $\calY_{\ell,m}$, $d_{\ell,m}=\dim\calY_{\ell,m}$ its dimension, and $dk$ the normalized Haar measure on $K$. Note that
			$$ \|\calP_{\ell,m}f\|_\infty \leq d_{\ell,m}\|f\|_\infty \qquad \mbox{for all }f\in C_c^\infty(X). $$
			Write $f\in C_T^\infty(X)$ as $f=\sum_{\ell,m}\calP_{\ell,m}f$ and note that $\langle\varphi_s,P_{\ell,m}\rangle=0$ for $m>0$. Then we can estimate as follows:
			\begin{align*}
				|\langle\varphi_s,f\rangle| &\leq \sum_{\ell}|\langle\varphi_s,\calP_{\ell,0}f\rangle|\\
				&\leq C(1+|\IM s|)^{N}e^{\frac{\pi}{2}|\IM s|}\sum_{\ell}(1+|\ell|)^{-n}\|(1-\Delta_B)^N\calP_{\ell,0}f\|_\infty\\
				&= C(1+|\IM s|)^{N}e^{\frac{\pi}{2}|\IM s|}\sum_{\ell}(1+|\ell|)^{-n}\|\calP_{\ell,0}(1-\Delta_B)^Nf\|_\infty\\
				&\leq C(1+|\IM s|)^{N}e^{\frac{\pi}{2}|\IM s|}\|(1-\Delta_B)^Nf\|_\infty\underbrace{\sum_{\ell}(1+|\ell|)^{-n}d_{\ell,0}}_{<\infty\textup{ for $n$ sufficiently large}},
			\end{align*}
			where the last claim follows from the fact that $d_{\ell,0}$ is polynomial in $\ell$. This implies \eqref{eq:PWreduction2}.
			\item Since $\Box\varphi_s=-(s^2-\rho^2)\varphi_s$ and $|s^2-\rho^2|\sim(1+|\IM s|)^2$ for $|\IM s|$ sufficiently large and $|\RE s|\leq R$, we have for arbitrary $k\in\N$:
			\begin{align*}
				|\langle\varphi_s,f\rangle| &= |s^2-\rho^2|^{-k}|\langle\Box^k\varphi_s,f\rangle|\\
				&= |s^2-\rho^2|^{-k}|\langle\varphi_s,\Box^kf\rangle|\\
				&\leq C(1+|\IM s|)^{N-k}e^{\frac{\pi}{2}|\IM s|}\|(1-\Delta_B)^N\Box^kf\|_\infty,
			\end{align*}
			for a constant $C>0$ depending on $k$. This shows that for all $R,T,M>0$ there exist $C,N>0$ such that
			\begin{equation}\label{eq:PWreduction3}
				|\langle\varphi_s,f\rangle| \leq C(1+|\IM s|)^{-M}e^{\frac{\pi}{2}|\IM s|}\|(1-\Delta_B)^N\Box^Nf\|_\infty,
			\end{equation}
			for all $s\in\C$ with $|\RE s|\leq R$ and all $f\in C_T^\infty(X)$.
			\item We conclude the proof by noting that $(f*\varphi_s)(x)=\langle\varphi_s,L_{g^{-1}}f\rangle$, where $x=g\cdot x_0$. Given a compact subset $\Omega\subseteq X$, we can choose a compact subset $\widetilde{\Omega}\subseteq G$ which projects onto $\Omega\subseteq X=G/H$. For fixed $f\in C_c^\infty(X)$, let $T>0$ be large enough such that $L_{g^{-1}}f\in C_T^\infty(X)$ for all $g\in\widetilde{\Omega}$. Then by \eqref{eq:PWreduction3}, for all $R,M>0$ there exist $C,N>0$ such that
			\begin{align*}
				|(f*\varphi_s)(x)| &= |\langle\varphi_s,L_{g^{-1}}f\rangle|\\
				&\leq C(1+|\IM s|)^{-M}e^{\frac{\pi}{2}|\IM s|}\|(1-\Delta_B)^N\Box^NL_{g^{-1}}f\|_\infty,
			\end{align*}
			for all $x=g\cdot x_0$, $g\in\widetilde{\Omega}$ and all $s\in\C$ with $|\RE s|\leq R$. Since the map $g\mapsto\|(1-\Delta_B)^N\Box^NL_{g^{-1}}f\|_\infty$ is continuous, it is bounded on $\widetilde{\Omega}$ and the proof is complete.\qed
		\end{enumerate}


\begin{thebibliography}{10}

\bibitem{And01}
Nils~Byrial Andersen, \emph{Paley-{W}iener theorems for hyperbolic spaces}, J.
  Funct. Anal. \textbf{179} (2001), no.~1, 66--119.

\bibitem{AAR99}
George~E. Andrews, Richard Askey, and Ranjan Roy, \emph{Special functions},
  Encyclopedia of Mathematics and its Applications, vol.~71, Cambridge
  University Press, Cambridge, 1999.

\bibitem{BMB93}
Alain Bachelot and Agn\`es Motet-Bachelot, \emph{Les r\'{e}sonances d'un trou
  noir de {S}chwarzschild}, Ann. Inst. H. Poincar\'{e} Phys. Th\'{e}or.
  \textbf{59} (1993), no.~1, 3--68.


\bibitem{faraut1979distributions}
Jacques Faraut, \emph{Distributions sph{\'e}riques sur les espaces
  hyperboliques}, J. Math. Pures Appl. \textbf{58} (1979), 369--444.

\bibitem{Gui05}
Colin Guillarmou, \emph{Meromorphic properties of the resolvent on
  asymptotically hyperbolic manifolds}, Duke Math. J. \textbf{129} (2005),
  no.~1, 1--37.

\bibitem{GZ95}
Laurent Guillop\'{e} and Maciej Zworski, \emph{Polynomial bounds on the number
  of resonances for some complete spaces of constant negative curvature near
  infinity}, Asymptotic Anal. \textbf{11} (1995), no.~1, 1--22.

\bibitem{hilgert2009resonances}
Joachim Hilgert and Angela Pasquale, \emph{Resonances and residue operators for
  symmetric spaces of rank one}, J. Math. Pures Appl. \textbf{91} (2009), no.~5, 495--507.

\bibitem{HPP16}
Joachim Hilgert, Angela Pasquale, and Tomasz Przebinda, \emph{Resonances for
  the {L}aplacian: the cases {$BC_2$} and {$C_2$} (except {${\rm SO}_0(p,2)$}
  with {$p>2$} odd)}, Geometric methods in physics, Trends Math.,
  Birkh\"{a}user/Springer, [Cham], 2016, pp.~159--182.

\bibitem{HPP17a}
\bysame, \emph{Resonances for the {L}aplacian on products of two rank one
  {R}iemannian symmetric spaces}, J. Funct. Anal. \textbf{272} (2017), no.~4,
  1477--1523.

\bibitem{HPP17b}
\bysame, \emph{Resonances for the {L}aplacian on {R}iemannian symmetric spaces:
  the case of {$\mathrm{SL}(3,\mathbb{R})/\mathrm{SO}(3)$}}, Represent. Theory
  \textbf{21} (2017), 416--457.

\bibitem{HT93}
Roger~E. Howe and Eng-Chye Tan, \emph{Homogeneous functions on light cones: the
  infinitesimal structure of some degenerate principal series representations},
  Bull. Amer. Math. Soc. (N.S.) \textbf{28} (1993), no.~1, 1--74.

\bibitem{Koo75}
Tom Koornwinder, \emph{A new proof of a {P}aley-{W}iener type theorem for the
  {J}acobi transform}, Ark. Mat. \textbf{13} (1975), 145--159.

\bibitem{Kos83}
Menno~Tjeerd Kosters, \emph{{Spherical distributions on rank one symmetric
  spaces}}, {PhD} thesis, Universiteit Leiden, 1983.

\bibitem{MV05}
Rafe Mazzeo and Andr\'{a}s Vasy, \emph{Analytic continuation of the resolvent
  of the {L}aplacian on symmetric spaces of noncompact type}, J. Funct. Anal.
  \textbf{228} (2005), no.~2, 311--368.

\bibitem{MM87}
Rafe~R. Mazzeo and Richard~B. Melrose, \emph{Meromorphic extension of the
  resolvent on complete spaces with asymptotically constant negative
  curvature}, J. Funct. Anal. \textbf{75} (1987), no.~2, 260--310.

\bibitem{MW92}
Roberto~J. Miatello and Nolan~R. Wallach, \emph{The resolvent of the
  {L}aplacian on locally symmetric spaces}, J. Differential Geom. \textbf{36}
  (1992), no.~3, 663--698.

\bibitem{MW00}
Roberto~J. Miatello and Cynthia~E. Will, \emph{The residues of the resolvent on
  {D}amek-{R}icci spaces}, Proc. Amer. Math. Soc. \textbf{128} (2000), no.~4,
  1221--1229.

\bibitem{Rob22}
Simon Roby, \emph{Resonances of the {L}aplace operator on homogeneous vector
  bundles on symmetric spaces of real rank-one}, Adv. Math. \textbf{408}
  (2022), Paper No. 108555, 55.

\bibitem{SBZ97}
Ant\^{o}nio S\'{a}~Barreto and Maciej Zworski, \emph{Distribution of resonances
  for spherical black holes}, Math. Res. Lett. \textbf{4} (1997), no.~1,
  103--121.

\bibitem{Shi95}
Nobukazu Shimeno, \emph{Harmonic analysis on homogeneous vector bundles on
  hyperbolic spaces}, Tokyo J. Math. \textbf{18} (1995), no.~2, 383--400.

\bibitem{Str05}
Alexander Strohmaier, \emph{Analytic continuation of resolvent kernels on
  noncompact symmetric spaces}, Math. Z. \textbf{250} (2005), no.~2, 411--425.

\bibitem{Wil03}
Cynthia~E. Will, \emph{The meromorphic continuation of the resolvent of the
  {L}aplacian on line bundles over {$\bold CH(n)$}}, Pacific J. Math.
  \textbf{209} (2003), no.~1, 157--173.

\bibitem{Wol67}
Joseph~A. Wolf, \emph{Spaces of constant curvature}, McGraw-Hill Book Co., New
  York-London-Sydney, 1967.

\end{thebibliography}

\providecommand{\bysame}{\leavevmode\hbox to3em{\hrulefill}\thinspace}
\providecommand{\href}[2]{#2}

\contact

\end{document}